\title{Limits of locally-globally convergent graph sequences}
\author{Hamed Hatami\\
School of Computer Science, McGill University\\
Montreal, Quebec, Canada H3A 0G4\thanks{Research of H.H.\ was
supported by an NSERC and an FQRNT grant. Research of L.L.\ was
supported by ERC Grant No.~227701 and OTKA grant No.~CNK 77780.
Research of B.Sz.\ was supported by an NSERC grant.}\\[.5cm]
L\'aszl\'o Lov\'asz \\
Institute of Mathematics, E\"otv\"os Lor\'and University
Budapest, Hungary H-1117\\[.5cm]
Bal\'azs Szegedy\\
Department of Mathematics, University of Toronto\\
Toronto, Ontario, Canada M5S2E4}
\date{December 2012}
\newcommand{\ignore}[1]{}
\newtheorem{prelem}{{\bf Theorem}}
\newtheorem{theorem}{Theorem}[section]
\newtheorem{corollary}[theorem]{Corollary}
\newtheorem{lemma}[theorem]{Lemma}
\newtheorem{proposition}[theorem]{Proposition}
\newtheorem{claim}{Claim}
\newtheorem{remark}[theorem]{Remark}
\newtheorem{example}[theorem]{Example}
\newtheorem{definition}[theorem]{Definition}
\newtheorem{conjecture}[theorem]{Conjecture}
\newtheorem{question}[theorem]{Question}
\newenvironment{proof}{{\bf Proof.}}{\hfill\rule{2mm}{2mm}\medskip}
\def\proofend{\hfill\rule{2mm}{2mm}}
\def\one{{\mathbbm1}}
\def\AA{\mathcal{A}}\def\BB{\mathcal{B}}\def\CC{\mathcal{C}}
\def\FF{\mathcal{F}}
\def\GG{\mathcal{G}}\def\HH{\mathcal{H}}
\def\KK{\mathcal{K}}
\def\PP{\mathcal{P}}\def\QQ{\mathcal{Q}}
\def\Gf{\mathfrak{G}}
\def\Nbb{\mathbb{N}}
\def\Rbb{\mathbb{R}}
\def\eps{\varepsilon}
\def\Pr {{\rm Pr}}
\def\bV {{\bf V}}
\def\bE {{\bf E}}
\def\bG {{\bf G}}
\newcommand{\dvar}{{d_{\mathrm{var}}}}
\begin{document}
\maketitle \addtolength{\baselineskip}{3pt}

\begin{abstract}
The colored neighborhood metric for sparse graphs was introduced by
Bollob\'as and Riordan \cite{BR}. The corresponding convergence
notion refines a convergence notion introduced by Benjamini and
Schramm \cite{BS}. We prove that even in this refined sense, the
limit of a convergent graph sequence (with uniformly bounded degree)
can be represented by a graphing. We study various topics related to
this convergence notion such as: Bernoulli graphings, factor of i.i.d.
processes and hyperfiniteness.
\end{abstract}

\newpage
\tableofcontents


\section{Introduction}
The theory of graph convergence is a recently emerging field. It
creates a link between combinatorics and analysis similarly as
F\"urstenberg's correspondence principle connects finite integer
sequences with measure preserving systems. Interestingly (or rather
unfortunately) there is no unified theory of graph convergence.
Instead there are various convergence notions that work well in
different situations. For example the theory of dense graph limits
\cite {LSz, LSz2, BCLSV1} works well if the number of edges is
quadratic in the number of vertices but it trivializes for sparser
graphs. On the other hand the Benjamini--Schramm limit \cite{BS} is
only defined for graphs which have a linear number of edges in terms
of the vertices. In the regime between linear and quadratic the
situation is more complicated.

In this paper we focus on the very sparse case were graphs have
degrees bounded by some fixed number $d$ (which we consider as fixed
throughout). According to Benjamini and Schramm, a graph sequence
$(G_n)_{n=1}^\infty$ is convergent if the distribution of the
isomorphism types of neighborhoods of radius $r$ (when a vertex is
chosen uniformly at random in $G_n$) converges for every fixed $r$.
This notion of convergence is called \emph{local convergence},
\emph{weak convergence} or  \emph{Benjamini--Schramm convergence}.

The following example illustrates why a different, stronger notion of
convergence is needed in some cases. For odd $n$, let $G_n$ be a
$d$-regular expander graph on $n$ nodes. For even $n$, let $G_n$ be
the disjoint union of two $d$-regular expander graphs on $n/2$ nodes.
Assume that the girth of $G_n$ tends to infinity. Then the sequence
$G_n$ is locally convergent, but clearly even and odd members of the
sequence are quite different, and it would be desirable to refine our
notion of convergence to distinguish them.

Bollob\'as and Riordan \cite{BR} introduced such a finer convergence
notion (i.e., fewer sequences are convergent). A graph sequence
$(G_n)_{n=1}^\infty$ is convergent in this sense if for every
$r,k\in\mathbb{N}$ and $\eps>0$ there is an index $l$ such that if
$n,m>l$, then for every coloring of the vertices of $G_n$ with $k$
colors, there is a coloring of the vertices of $G_m$ with $k$ colors
such that the distance between the distributions of colored
neighborhoods of radius $r$ in $G_n$ and $G_m$ is at most $\eps$.
This is equivalent to saying that $G_n$ and $G_m$ are close in the
colored neighborhood metric introduced in \cite{BR}. This finer
notion of convergence is sensitive to both local and global
properties of the graphs whereas the Benjamini--Schramm convergence
is only sensitive to local properties. For this reason we call this
notion \emph{local-global convergence}.

Benjamini and Schramm described a limit object for locally convergent
sequences in the form of an involution-invariant distribution on
rooted countable graphs with bounded degree. One can also describe
this limit object as a \emph{graphing} (Aldous and Lyons
\cite{AldousLyons}, Elek \cite{Elek}), which is a bounded degree
graph on a Borel probability space such that the edge set is Borel
measurable and it satisfies a certain measure preservation property.
(We will give a precise definition below.) Neighborhood statistics in
graphings can be defined by using the probability space structure on
the vertex set. Every involution-invariant distribution can be
represented by a graphing. We note that graphings are common
generalizations of bounded degree graphs and measure preserving
systems and so they are also interesting from an ergodic theoretic
point of view.

However, the graphing representing the limit object of a locally
convergent graph sequence is not unique: different graphings can
describe the same involution-invariant distribution. In other words,
a graphing contains more information than just the limiting
neighborhood distribution. This suggests that graphings can be used
to represent limit objects for more refined convergence notions.
Indeed, in the present paper we show that the limit of a local-global
convergent sequence can also be represented by a graphing in the
sense that the graphs in the sequence converge to the graphing in the
colored neighborhood metric. This means that for every local-global
convergent sequence we produce a graphing which contains both local
and global information about the graphs.

We highlight the importance of a special family of graphings called
\emph{Bernoulli graphings}. We show that with given local statistics,
the Bernoulli graphings contain the least global information. This
means that the global properties of a Bernoulli graphing can be
modeled with an arbitrary precision on any other graphing with the
same local statistics.
For a graph $G$, being close to a Bernoulli graphing in the
{local-global sense means that the local statistics of any coloring
on $G$ can be modeled by a randomized process called \emph{local
algorithm} or \emph{factor of i.i.d. process}.}

Roughly speaking, a hyperfinite graph sequence is a bounded degree
sequence whose members can be cut into small connected components
removing a small set of vertices (or equivalently edges). {We prove
that a locally convergent hyperfinite sequence is locally-globally
convergent, and its limit is a Bernoulli graphing. (This was proved
independently by Elek \cite{Elek2}). It is an interesting question
how to construct a non-hyperfinite sequence converging to a Bernoulli
graphing.}

\section{Local-Global convergence of bounded degree graphs}

A \emph{rooted graph} is a pair $(G,o)$ where $o$ is a vertex of a
graph $G$. The \emph{radius} of a rooted graph is the distance of the
farthest vertex in $G$ to $o$. We denote by $U^{r}$ the set of all
rooted graphs with radius at most $r$ (and all degrees bounded by
$d$). For an integer $r \ge 0$, and a vertex $v$ in a graph $G$, let
$N_{G,r}(v)$ denote the subgraph of $G$ rooted at $v$ and induced by
the vertices that are at a distance at most $r$ from $v$. Two rooted
graphs $(G,o)$ and $(G',o')$ are said to be \emph{isomorphic} if
there is an isomorphism from $G$ to $G'$ that maps $o$ to $o'$.

Given a finite graph $G$ and a radius $r\ge 0$, we can choose a node
$v \in V(G)$ uniformly and randomly, and consider the distribution of
$N_{G,r}(v)$. Let $P_{G,r}$ denote this probability measure on $U^r$.
We say that a sequence $(G_n)$ of finite graphs is \emph{locally
convergent} (or Benjamini--Schramm convergent) if $P_{G_n,r}$
converges to a limit distribution as $n\to\infty$, for every fixed
$r\ge 0$.

Denote the set of probability measures on a Borel space $X$ by
$M(X)$. Note that since $U^r$ is finite, all the usual distances on
$M(U^r)$ are topologically equivalent. We shall usually work with the
\emph{total variation distance} $\dvar$, defined (in general, for a
space $X$) by
$$
\dvar(\mu,\nu)=\sup_{A\subseteq X}|\mu(A)-\nu(A)|
$$
where $A$ runs through the Borel measurable sets.

To define our refinement of local convergence, we consider vertex
colorings. For a finite graph $G$, let $K(k,G)$ denote the set of all
vertex colorings with $k$ colors. Fix integers $k$ and $r$, and let
$U^{r,k}$ be the set of all triples $(H,o,c)$ where $(H,o)$ is a
rooted graph of radius at most $r$ and $c$ is an arbitrary
$k$-coloring of $V(H)$. Consider a finite graph $G$ together with a
$c \in K(k,G)$. Pick a random vertex $v$ from $G$. Then the
restriction of the $k$-coloring to $N_{G,r}(v)$ is an element in
$U^{r,k}$, and thus for the graph $G$, every $c\in K(k,G)$ introduces
a probability distribution on $U^{r,k}$ which we denote by
$P_{G,r}[c]$. Sometimes we  refer to the probability distributions
$P_{G,r}[c]$ (for $r \ge 0$) as  \emph{local statistics} of the
coloring $c$. Let
$$
Q_{G,r,k} :=\bigl\{P_{G,r}[c]:~ c \in K(k,G)\bigr\} \subseteq
M(U^{r,k}).
$$
These sets are similar to ``quotient sets'' introduced in
\cite{BCLSV2} for dense graphs, except that there only edges with the
given coloring were counted, while here we consider the colors on
larger neighborhoods. Notice that the sets $Q_{G,r,k}$ are finite,
and they are subsets of the finite dimensional space $\Rbb^{U^{r,k}}$
that is independent of the graph $G$.

\begin{definition}
\label{def:local-global} A sequence of finite graphs
$(G_n)_{n=1}^\infty$ with all degrees at most $d$ is called
\emph{locally-globally convergent} if for every $r,k\ge 1$, the
sequence $(Q_{G_n,r,k})_{n=1}^\infty$ converges in the Hausdorff
distance inside the compact metric space $(M(U^{r,k}),\dvar)$.
\end{definition}

{In other words, if $i$ and $j$ are large enough, then for every
$k$-coloring $c_i$ of $V(G_i)$ there is a $k$-coloring $c_j$ of
$V(G_j)$ so that the distributions of colored $r$-neighborhoods of
$(G_i,c_i)$ and $(G_j,c_j)$ are almost the same.}

Since compact subsets of a compact metric space form a compact space
with respect to the Hausdorff metric, it follows that every infinite
sequence of finite graphs contains a locally-globally convergent
subsequence.

\ignore{We can also define local-global convergence through
appropriate metrics. Let $G_1$ and $G_2$ be two finite graphs. First,
for a fixed depth $r\ge 1$ and number of colors $k\ge1$ we define
$d_\text{\rm sample}^{(r,k)}(G_1,G_2)$ as the infimum of all numbers
$a>0$ such that for every $k$-coloring $c_1$ of $V(G_1)$ there exists
a $k$-coloring $c_2$ of $V(G_2)$ such that $d_\text{\rm
var}\bigl(P_{G_1,r}[c_1], P_{G_2,r}[c_2]\bigr)\le a$, and vice versa.
We then define the \emph{nondeterministic sampling distance} as a
weighted average of these numbers:
\[
d_\text{\rm nd}(G_1,G_2) = \sum_{k=1}^\infty \sum_{r=1}^\infty
\frac{1}{2^{r+k}} d^{(r,k)}_\text{\rm sample}(G_1,G_2).
\]
It is easy to see that this defines a metric on finite graphs, and a
sequence is local-global convergent if and only if it is Cauchy in
this metric.}

Fixing $k=1$ in Definition~\ref{def:local-global}, we recover a
metric definition of Benjamini--Schramm convergence. It is easy to
construct examples of graph sequences  which are convergent in
Benjamini--Schramm  sense, but not locally-globally. However we do
not know whether $k=2$ would give a convergence notion equivalent to
local-global convergence.

It is natural to ask if we obtain a different convergence notion if
we replace vertex colorings by edge colorings or other locally
defined extra structures. It turns out that all local structures can
be encoded by vertex colorings, and thus they do not lead to
different convergence notions. As an example, we show how to encode
edge colorings by vertex colorings.

Let $G$ be a graph with all degrees at most $d$ and let
$c:~E(G)\rightarrow [k]$ be an edge coloring of $G$. It is easy to
see that there exists an edge coloring $c_1:~E(G)\rightarrow
[30d^3k]$ such that $c_1(e)\equiv c(e)$ modulo $k$ for every $e\in
E(G)$, and  if $c_1(e_1)=c_1(e_2)$ holds, then the edges $e_1$ and
$e_2$ are of distance at least $3$ in the edge graph of $G$. It is
clear that $c_1$ encodes the coloring $c$ in the sense that local
statistics of $c_1$ modulo $k$ give the local statistics of $c$. Let
$S$ denote the set of subsets of $[30d^3k]$ of size at most $d$. We
define the vertex coloring $c_2:~V(G)\rightarrow S$ by setting
$c_2(v)$ to be the set of $c_1$-colors of the edges incident to $v$.
Now it is easy to see that $c_2$ encodes the coloring $c_1$ in the
following way. If $e=(v,w)$ is an edge in $G$, then {$\{c_1(e)\}$} is
the intersection of the sets $c_2(v)$ and $c_2(w)$.

\section{Involution-invariant measures and graphings}\label{graphing}

Benjamini and Schramm \cite{BS} associated a limit object with every
locally convergent graph sequence as follows. Let $\Gf$ denote the
set of (isomorphism classes of) rooted, connected (possibly infinite)
graphs with all degrees at most $d$. For a rooted graph $(B,o)$ with
radius $r$, we denote by $\Gf(B,o)$ the set of all rooted graphs
$(G,o)$ such that $N_{G,r}(o)\cong (B,o)$. For a rooted graph
$(G,o)$, we define a neighborhood basis at $(G,o)$ as
$\Gf(N_{G,r}(o))$. These neighborhoods define a topology on $\Gf$. It
is easy to see that this is a compact separable space.

The Benjamini--Schramm limit of the locally convergent graph sequence
$(G_n)_{n=1}^\infty$ is a probability measure $\nu$ on the Borel sets
of $\Gf$, such that
\[
\lim_{n \to \infty}
P_{G_n,r}(B,o) = \nu(\Gf(B,o))
\]
for every $r\ge1$ and every rooted graph $(B,o)$ of radius  $r$.

Not every probability measure on $\Gf$ arises as the limit of a
convergent graph sequence. One property that all limits have is
called \emph{involution invariance} or \emph{unimodularity}. To
define this, let $\tilde{\Gf}$ denote the space of graphs in $\Gf$
with a distinguished edge incident to the root. Let
$\alpha:\tilde{\Gf}\rightarrow\tilde{\Gf}$ denote the continuous
transformation that moves the root to the other endpoint of the
distinguished edge. For every probability measure $\mu$ on $\Gf$,
define $\mu^*$ to be the unique probability measure on $\Gf$ such
that $d\mu^*/d\mu(G)$ is proportional to the degree of the root in
$G$. Define the probability measure $\tilde{\mu}$ on $\tilde{\Gf}$ by
first picking a $\mu^*$-random graph, and then distinguishing a
random edge incident to the root. The measure $\mu$ is called
\emph{involution-invariant} if $\tilde{\mu}$ is invariant under
$\alpha$. Involution-invariant measures on $\Gf$ form a closed set in
the weak topology.

Let $G$ be a finite graph, and let the probability measure $\nu$ on
the Borel sets of $\Gf$ be defined as $\nu(\Gf(B,o))=P_{G,r}(B,o)$
for every $r\ge1$ and every rooted graph $(B,o)$ of radius $r$. It is
easy to see that $\nu$ is involution-invariant. It follows that every
measure on $\Gf$ that is the limit of finite graphs is
involution-invariant. Aldous and Lyons~\cite{AldousLyons} conjectured
that all involution-invariant measures arise as graph limits. The
Aldous-Lyons conjecture is considered to be one of the most important
open problems in this area.

In the dense setting, the set of the symmetric measurable maps
$w:[0,1]^2 \rightarrow [0,1]$ were used to generalize the concept of
graphs and describe graph limits \cite{LSz}. For local-global
convergence {(Definition \ref{def:local-global})}, graphings serve
this purpose.

\begin{definition}\label{defgraphing}
Let $X$ be a Polish topological space and let $\nu$ be a probability
measure on the Borel sets in $X$. A {\em graphing} is a graph $\GG$
on $V(\GG)=X$ with Borel measurable edge set $E(\GG)\subset X\times
X$ in which all degrees are at most $d$ and
\begin{equation}
\label{eq:degreeCond} \int_A e(x,B) d\nu(x) = \int_B e(x,A) d\nu(x)
\end{equation}
for all measurable sets $A,B\subseteq X$, where $e(x,S)$ is the
number of edges from $x\in X$ to $S\subseteq X$.
\end{definition}

Note that every finite graph $G$ is a graphing where $X=V(G)$ and
$\nu_\GG$ is the uniform distribution on $V(G)$.

If  \eqref{eq:degreeCond} holds, then $\eta^*(A\times B)=\int_A
e(x,B) d\nu(x)$ defines a measure on the Borel sets of $X\times X$.
This measure is concentrated on $E(\GG)$, symmetric in the two
coordinates, and its marginal $\nu^*$ satisfies $(d\nu^*/d\nu)(x)
=\deg(x)$. Normalizing by $d_0=\int_X \deg(x)\,dx$, we get a
probability distribution $\eta$ on the set of edges. We can generate
a random edge from $\eta$ by selecting a random point $v$ from
$\nu^*$ and selecting uniformly a random edge incident with $v$.
Conversely, if $\GG$ is a Borel graph and we have a measure $\eta^*$
on $X\times X$ that is concentrated on $E(\GG)$, so that
$\eta^*(A\times B)=\int_A e(x,B) d\nu(x)$, then \eqref{eq:degreeCond}
follows by Fubini's theorem, and so $\GG$ is a graphing.

Let $\GG$ be a graphing (of degree at most $d$) on the probability
space $(X,\nu)$. Then it induces a measure $\mu_{\GG}$ on $\Gf$: pick
a random element $x\in X$ and take its connected component $\GG_x$
rooted at $x$. It is easy to see that $\mu_{\GG}$ is an
involution-invariant measure. (In fact, \eqref{eq:degreeCond} just
expresses this property.)

Let $\GG$ be a graphing as in Definition \ref{defgraphing}. A {\em
vertex coloring} of $\GG$ with $k$ colors is a measurable function
$c:~X\rightarrow [k]$. The set of all such colorings  will be denoted
by $K(k,\GG)$. We define $P_{\GG,r}[c]$ and $Q_{\GG,r,k}$ in a
similar way as in a finite graph. (Notice that it makes sense to talk
about a random vertex in $\GG$.) The set $Q_{\GG,r,k}$ is a subset of
the finite dimensional space $\Rbb^{U^{r,k}}$, but in general it
might be infinite and not necessarily closed; we will often use its
closure $\overline{Q}_{\GG,r,k}$ (see
Question~\ref{ques:closedness}).

Now we are ready to state our main theorem.

\begin{theorem}
\label{thm:graphing} Let $(G_i)_{i=1}^\infty$ be a local-global
convergent sequence of finite graphs with all degrees at most $d$.
Then there exists a graphing $\GG$ such that $Q_{G_n,r,k}\to
Q_{\GG,r,k}$ $(n\to\infty)$ in Hausdorff distance for every $r$ and
$k$.
\end{theorem}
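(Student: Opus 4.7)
The plan is to construct the graphing $\GG$ as a Loeb ultraproduct of the sequence $(G_n)$ and then match the colored-neighborhood quotient sets directly. Fix a non-principal ultrafilter $\omega$ on $\Nbb$ (we may assume $|V(G_n)|\to\infty$, otherwise the claim is trivial after passing to a subsequence). Let $X$ be the ultraproduct $\prod_n V(G_n)/\omega$ equipped with the Loeb probability measure $\nu$ coming from the normalized counting measures on the $V(G_n)$; realize $(X,\nu)$ as a Polish probability space via the standard measure isomorphism with $([0,1],\text{Lebesgue})$. Declare $(x,y)\in E(\GG)$ exactly when $\{n:(x_n,y_n)\in E(G_n)\}\in\omega$. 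Then $\GG$ has maximum degree $d$, the edge set is (transported to be) Borel, and the degree condition \eqref{eq:degreeCond} reduces via Fubini to the identity that holds trivially in each finite $G_n$; so $\GG$ is a graphing.

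For the inclusion $\overline{Q}_{\GG,r,k}\subseteq\lim_n Q_{G_n,r,k}$, take an arbitrary measurable coloring $c:X\to[k]$. A standard Loeb-measure approximation shows that for every $\eps>0$, $c$ agrees off a $\nu$-set of measure $<\eps$ with an internal coloring $\tilde c=[c_n]_\omega$ arising from honest colorings $c_n\in K(k,G_n)$. The key observation is that for each rooted colored type $(H,o,\gamma)\in U^{r,k}$ the set of vertices in $\GG$ whose colored $r$-neighborhood is isomorphic to $(H,o,\gamma)$ is internal, because bounded degree forces $r$-balls to have size at most $1+d+\cdots+d^r$, so only finitely many rooted colored isomorphism types can appear and the type of $x=[x_n]$ is determined by the types of the $x_n$ in $(G_n,c_n)$ for $\omega$-most $n$. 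Taking Loeb measure identifies $P_{\GG,r}[\tilde c](H,o,\gamma)$ with $\lim_\omega P_{G_n,r}[c_n](H,o,\gamma)$. Since $Q_{G_n,r,k}$ converges in Hausdorff distance by hypothesis, this ultralimit equals the ordinary Hausdorff limit, and letting $\eps\to 0$ places $P_{\GG,r}[c]$ in the closure of that limit set.

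For the reverse inclusion $\lim_n Q_{G_n,r,k}\subseteq\overline{Q}_{\GG,r,k}$, take any point $\mu$ in the Hausdorff limit and extract a sequence $c_n\in K(k,G_n)$ with $\dvar(P_{G_n,r}[c_n],\mu)\to 0$. Define $c(x):=\lim_\omega c_n(x_n)$; this is an internal, hence Loeb-measurable, $[k]$-coloring of $\GG$. By the same type-counting argument, $P_{\GG,r}[c]=\lim_\omega P_{G_n,r}[c_n]=\mu$, so $\mu\in Q_{\GG,r,k}$. Combining the two inclusions yields $Q_{G_n,r,k}\to\overline{Q}_{\GG,r,k}$ in Hausdorff distance, which is the theorem (under the Hausdorff distance a set and its closure are identified).

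The main technical obstacle is the step connecting arbitrary Borel colorings of $\GG$ to internal ultraproduct colorings: one needs both that Loeb-measurable $[k]$-valued functions are approximable in $\nu$-measure by internal ones and that such an approximation perturbs the colored $r$-neighborhood distribution only by a small amount in $\dvar$. The second part is where bounded degree is essential—altering the coloring on a set of measure $<\eps$ can affect the colored $r$-neighborhood type of at most $(1+d+\cdots+d^r)\eps$ additional vertices, giving uniform continuity of $P_{\GG,r}[\cdot]$ with respect to the $L^1(\nu)$ topology on colorings. Once this is in place, the remainder of the argument is bookkeeping with Hausdorff convergence and the compactness of $M(U^{r,k})$.
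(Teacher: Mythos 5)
Your argument is the ultraproduct route that the paper itself sketches in Section~\ref{NONSTAND} as an alternative proof --- and the paper explicitly stops short of completing it there, for exactly the reason your proposal glosses over. The gap is the sentence ``realize $(X,\nu)$ as a Polish probability space via the standard measure isomorphism with $([0,1],\text{Lebesgue})$.'' No such isomorphism exists: the Loeb measure algebra of $\prod_n V(G_n)/\omega$ is non-separable (its $L^2$ space is non-separable), whereas a graphing in the sense of Definition~\ref{defgraphing} must live on a Polish space with a Borel measure and a Borel edge set. To repair this you must pass to a countably generated sub-$\sigma$-algebra of the Loeb algebra and realize that factor on a standard Borel space; but the sub-$\sigma$-algebra has to be chosen so that (i) the edge relation and the degree condition \eqref{eq:degreeCond} descend to the factor, which forces it to be invariant under the neighborhood operations of the graph, (ii) it contains enough internal colorings to witness every point of the Hausdorff limit (your second inclusion), and (iii) passing to the factor does not enlarge or otherwise change the sets $Q_{\GG,r,k}$ up to closure. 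None of this is automatic, and it is precisely ``the details'' that Section~\ref{NONSTAND} omits. Everything else in your write-up --- the internal approximation of Loeb-measurable colorings, the finiteness of colored $r$-ball types and the resulting identification of $P_{\GG,r}[\tilde c]$ with the ultralimit of the $P_{G_n,r}[c_n]$, the $(1+d+\cdots+d^r)\eps$ perturbation bound, and the Hausdorff bookkeeping in both inclusions --- is correct.

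For contrast, the paper's actual proof avoids ultraproducts entirely. Lemma~\ref{lem:keyGraphing} produces, for each $(r,k,\eps)$, a single coloring of $G_i$ with a bounded palette from which every $k$-coloring's local statistics can be recovered up to $\eps$ by merging color classes; decorating each $G_i$ with all of these colorings simultaneously gives a point of the genuinely compact Polish space $\Gf(C)$, and the limit graphing is obtained as a weak limit of the induced measures there. The decorations also separate nearby vertices, which kills automorphisms and is what makes the edge relation on $\Gf(C)$ a graphing. Your approach, once the separable-factor step is carried out honestly, buys the additional feature (noted in Section~\ref{NONSTAND}) that the quotient sets of the full ultraproduct graphing are closed --- though whether that survives the passage to a separable factor is exactly Question~\ref{ques:closedness}.
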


To what degree is the limit object determined? This question leads to
different notions of ``isomorphism'' between graphings.

\begin{definition}
Let  $(\GG_1,X_1,\nu_1)$ and $(\GG_2,X_2,\nu_2)$ be graphings.
\begin{itemize}
\item  They are called \emph{locally equivalent} if for every
$r\in\mathbb{N}$, the distribution of $N_{\GG_1,r}(x_1)$ is the same
as the distribution of $N_{\GG_2,r}(x_2)$ for random $x_1\in X_1$ and
$x_2\in X_2$.

\item  They are called \emph{locally-globally equivalent}
if $\overline{Q}_{\GG_1,r,k}=\overline{Q}_{\GG_2,r,k}$ for every
$r,k\in\mathbb{N}$.
\end{itemize}
\end{definition}

Local equivalence of two graphings means that they induce the same
involution-invariant measure on $\Gf$. Local-global equivalence
implies local equivalence by setting $k=1$.

\begin{definition}[Local-global partial order]
Assume that $\GG_1$ and $\GG_2$ are two graphings of maximal degree
at most $d$. We say that $\GG_1\prec\GG_2$ if
$\overline{Q}_{\GG_1,r,k}\subseteq \overline{Q}_{\GG_2,r,k}$ for
every $r,k\ge1$. In particular, $\GG_1$ and $\GG_2$ are
locally-globally equivalent if and only if both $\GG_1\prec\GG_2$ and
$\GG_2\prec\GG_1$ hold.
\end{definition}

{In the setting of group actions, this partial order means the same
as ``weak containment'' of the corresponding group actions, and
local-global equivalence corresponds to ``weak equivalence'' (Kechris
\cite{Kech}).}

Recall that a measurable map $\phi:(X,\mu) \to (Y, \nu)$ is called
{\it measure-preserving} if $\mu (\phi^{-1}(A))=\nu(A)$ for every
measurable set $A \subseteq Y$. An easy way to prove a relation
$\GG_1\prec\GG_2$ between two graphings is the following. We call a
measure preserving map $\phi:~V(\GG_1)\to V(\GG_2)$ a \emph{local
isomorphism} if restricted to any connected component of $\GG_1$, we
get an isomorphism with a connected {component} of $\GG_2$. Clearly
local isomorphisms can be combined. However, a local isomorphism may
not be invertible! It is easy to see that the existence of a local
isomorphism $\GG_1\to\GG_2$ implies that $\GG_1$ and $\GG_2$ are
locally equivalent, and $\GG_2\prec\GG_1$.

\begin{example}
Let $G$ be a finite connected graph, and $G \cup G$ denote the
disjoint union of $G$ with itself. The function $\phi:~V(G \cup G)\to
V(G)$ that maps both copies of $G$ in $G \cup G$ isomorphically to
$G$ is a (non-invertible) local isomorphism. Consequently $G \cup G$
and $G$ are locally equivalent, and $G \prec G \cup G$. {However,}
$G$ and $G \cup G$ are not locally-globally equivalent.
\end{example}

We shall study the local-global equivalence and the local-global
partial order in Sections \ref{SEC:BERN} and \ref{SEC:JOINS}. In
particular, we will show that among all graphings in a local
equivalence class, there is always a smallest one {and a largest one}
in this partial order.

\ignore{We conclude this section with a few remarks.

\begin{remark}[Directed graphings] Let $X$ and $\nu$ be as in
Definition~\ref{defgraphing} and let $E(\GG)\subset X\times X$ be the
edge set of a directed Borel graph $\GG$ of bounded degree. For two
{sets} $A,B\subset X$, let $e(A,B)=|E(\GG)\cap A\times B|$ denote the
number of directed edges from $A$ to $B$ ({this} quantity may be
infinite). Then $\GG$ is called a (directed) graphing if
$$\int_A e(x,B)~d\nu=\int_B e(A,x)~d\nu$$ holds for any two
measurable sets $A,B$.

The following simple example for a directed graphing comes from
ergodic theory. Let $T:X\rightarrow X$ be a measure preserving
transformation which has a measure preserving inverse. Then the graph
$\{(x,T(x))|x\in X\}$ is a directed graphing. More specifically, let
$\theta$ be an irrational number and $T(x)=x+\theta$ on the circle
group $\mathbb{R}/\mathbb{Z}$.  Then $\{(x,x+\theta)| \
x\in\mathbb{R}/\mathbb{Z}\}$ is an ``ergodic'' directed graphing.
(Ergodic graphings are defined in Section~\ref{sec:expanders}.) A
similar but undirected graphing on $\mathbb{R}/\mathbb{Z}$ is given
by the edge set $\{(x\pm\theta)|x\in\mathbb{R}/\mathbb{Z}\}$.
\end{remark}

\begin{remark}[Decomposition into maps] The following construction
can be used to verify the graphing axiom \eqref{eq:degreeCond} in
some cases. Let $(X,\nu)$ be a measure space. A \emph{measure preserving
equivalence} between two measurable sets $A,B\subset X$ is a map
$\psi:A\rightarrow B$ which is measure preserving and has a measure
preserving inverse. A partial measure preserving equivalence between
$A$ and $B$ is a measure preserving equivalence between $A'\subset A$
and $B'\subset B$. Let $X=X_1\cup X_2\cup\dots\cup X_n$ be an almost
disjoint (intersections have $0$ measure) decomposition of $X$ and
for each pair $1\leq i<j\leq n$, let $\psi_{i,j}$ be a partial measure
preserving equivalence between $X_i$ and $X_j$. Then the symmetrized
version of $E=\cup_{1\leq i<j\leq n}\{(x,\psi_{i,j}(x))|x\in X_i\}$
is the edge set of a graphing. It is not hard to prove that each
graphing has such a decomposition. In fact any measurable coloring of
the vertex set in which vertices of the same color are of distance at
least $3$ yields such a decomposition. The existence of such a
coloring follows from results of Kechris, Solecki and Todorcevic
\cite{KST}. This construction gives an upper bound of $O(d^2)$ for
the number of maps. With more care (considering the line graph of
$\GG$), one can reduce this to $2d-1$. It is not known whether $d+1$
maps would suffice. We come back to this issue later in
Section~\ref{sec:concluding}.
\end{remark}
}

\section{Local limits of decorated graphs}\label{benschramm}

In this section we extend the formalism behind the Benjamini--Schramm
limits for the case when vertices are decorated by elements from a
compact space. Let $C$ be a second countable compact Hausdorff space.
Let $\Gf(C)$ denote the space of (isomorphism classes of) rooted,
connected (countable) graphs with all degrees at most $d$ such that
the vertices are decorated by elements from $C$. So the points of
$\Gf(C)$ are triples $(G,o,c)$, where $G$ is a connected countable
graph, $o\in V(G)$, and $c:~V(G)\to C$. If $C$ is the trivial (one
point) compact space, then $\Gf(C)$ can be identified with the space
$\Gf$ defined earlier. Two important special cases for us will be
when $C=[0,1]$ (assigning $[0,1]$-weights to vertices), and $C=[k]$
(coloring vertices by $k$ colors). With a slight abuse of notation,
these will be denoted by $\Gf[0,1]$ and $\Gf[k]$.

We put a compact topology on $\Gf(C)$ by specifying a basis of it.
Let $r$ be an arbitrary natural number and $(H,o)$ be a finite rooted
graph of radius $r$. Assume furthermore that every vertex $v$ of
$(H,o)$ is decorated by an open set $U_v$ in $C$. Let $S$ be the
collection of all $(G,o,c)\in\Gf(C)$ where the neighborhood
$N_{G,r}(o)$ is isomorphic to $(H,o)$, and furthermore there is an
isomorphism $\alpha:~N_{G,r}(o)\rightarrow (H,o)$ such that $c(v) \in
U_{\alpha(v)}$ for every $v\in N_{G,r}(o)$. It is easy to see that
$\Gf(C)$ with this topology is a compact, second countable, Hausdorff
space. As a consequence, probability measures on $\Gf(C)$ form a
compact space in the weak topology.

Let $G$ be a finite graph with all degrees at most $d$ in which the
vertices are $C$-labeled. We can construct a probability measure
$\mu_{G}$ on $\Gf(C)$ by putting a root $o$ on a randomly chosen
vertex $v\in V(G)$ and keeping only the connected component of the
root. A sequence $(G_n)_{n=1}^\infty$ of $C$-labeled graphs is called
\emph{locally convergent} if the corresponding measures
$\{\mu_{G_n}\}_{n=1}^\infty$ converge in the weak topology to some
measure $\mu$. The measure $\mu$ is the limit object of the sequence.

We define involution-invariance completely analogously to the
undecorated case, simply replacing $\Gf$ by $\Gf(C)$ everywhere.
Involution-invariant measures on $\Gf(C)$ form a closed set in the
weak topology. It follows that if $\mu$ is a measure on $\Gf(C)$ that
is the limit of finite $C$-decorated graphs, then it is
involution-invariant.

A \emph{$C$-decorated graphing} is a graphing  $\GG$ together with a
Borel function $c:~V(\GG)\rightarrow C$. Similarly as in the
undecorated case, every $C$-decorated graphing defines an
involution-invariant distribution. The measure $\mu_{\GG,c}$ on
$\Gf(C)$ is created by picking a random element $x\in V(\GG)$, and
taking its connected component $\GG_x$ rooted at $x$ together with
the vertex labels given by the restriction of $c$ to $V(\GG_x)$. It
is easy to see that $\mu_{\GG,c}$ is an involution-invariant measure.

\begin{remark}
\label{rem:UniversalGraphing} We can define a Borel graph on
$\Gf(C)$. The edge set $\mathcal{E}(C)$ of this graph consists of
pairs $((G,o_1,c),(G,o_2,c))\in \Gf(C) \times \Gf(C)$ such that
$(o_1,o_2)$ is an edge in $G$. Note that loop edges can arise in this
graph. For example if there is an automorphism of $(G,c)$ which takes
$o_1$ to its neighbor $o_2$, then $(G,o_1,c)$ is identified with
$(G,o_2,c)$ in $\Gf(C)$. In general it is not true that every
involution-invariant measure $\nu$ on $\Gf(C)$ turns this graph into
a graphing. This is due to the problem with automorphisms which also
lead to loops. However it is not hard to show that if for an
involution-invariant measure $\nu$, with probability one, a
$\nu$-random connected component has no automorphisms, then we get a
graphing $(\Gf(C),\nu,\mathcal{E}(C))$. One important role of
appropriate decorations is to break symmetries, and make this graph a
graphing.
\end{remark}

\section{A regularization lemma}

The following lemma is the main ingredient in proving
Theorem~\ref{thm:graphing}. It serves as a ``regularity lemma'' in
our framework for bounded degree graphs.

\begin{lemma}[Regularization]
\label{lem:keyGraphing} For positive integers $r,k$ and real number
$\eps>0$, there exists an integer $t_{r,k,\eps}$ such that the
following holds. For every graph $G$ with all degrees at most $d$,
there exists a $t_{r,k,\eps}$-vertex coloring $q$ of $G$ which
satisfies the following conditions.
\begin{itemize}

\item If $q(v)=q(w)$, then either $v=w$ or the distance of $v$
and $w$ in $G$ is at least $r+1$;

\item For every $g \in K(k,G)$, there exists
    $\alpha:[t_{r,k,\eps}]\rightarrow [k]$ such that
$$\dvar(P_{G,r}[g] , P_{G,r}[\alpha\circ q]) \le\eps.$$

\end{itemize}
\end{lemma}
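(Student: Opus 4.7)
The plan is to exploit the finite dimensionality of $M(U^{r,k})$ in order to reduce the universal quantifier ``for every $g$'' to a finite list of representative $k$-colorings which are simply hard-coded into $q$. Two ingredients make this work: (i) any $k$-coloring of $G$ induces a probability measure on the finite set $U^{r,k}$, whose size depends only on $r$, $k$ (and the fixed $d$), so $M(U^{r,k})$ is a compact finite-dimensional simplex admitting an $\eps/2$-net $\mu_1,\ldots,\mu_N$ of some size $N=N(r,k,\eps)$ in total variation; (ii) the distance condition of bullet 1 can be secured independently by a greedy proper coloring $q_0:V(G)\to[s_0]$ of the $r$-th power graph $G^r$, which has maximum degree $\le d(d-1)^{r-1}+\ldots+d=O(d^r)$, giving $s_0$ bounded only in terms of $r$ and $d$.

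Concretely, I would proceed as follows. Fix the $\eps/2$-net $\mu_1,\ldots,\mu_N$ of $M(U^{r,k})$ once and for all. For each $i\in[N]$, choose $g_i\in K(k,G)$ minimizing $\dvar(P_{G,r}[g_i],\mu_i)$ (breaking ties arbitrarily). Define
$$q(v):=\bigl(q_0(v),\,g_1(v),\,g_2(v),\ldots,g_N(v)\bigr)\in[s_0]\times[k]^N,$$
a coloring taking at most $t_{r,k,\eps}:=s_0\cdot k^N$ values, a bound depending only on $r,k,\eps$. Condition 1 is inherited verbatim from $q_0$: if $q(v)=q(w)$ then $q_0(v)=q_0(w)$, hence either $v=w$ or $d_G(v,w)\ge r+1$.

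For condition 2, take any $g\in K(k,G)$ and choose $i$ with $\dvar(P_{G,r}[g],\mu_i)\le\eps/2$, which exists by the net property. By the choice of $g_i$ as a minimizer we also have $\dvar(P_{G,r}[g_i],\mu_i)\le\dvar(P_{G,r}[g],\mu_i)\le\eps/2$, so the triangle inequality gives
$$\dvar\bigl(P_{G,r}[g],P_{G,r}[g_i]\bigr)\le\eps.$$
Now let $\alpha:[t_{r,k,\eps}]\to[k]$ be the projection onto the coordinate carrying $g_i$; then $\alpha\circ q=g_i$ pointwise, and the bound is immediate. The only conceptual step (and there is no real obstacle beyond it) is realising that $q$ does not need to ``encode all local information'': the approximability built into the net lets one get by with merely remembering finitely many carefully chosen reference colorings, plus a distance-$(r+1)$ separator that is completely independent of $g$.
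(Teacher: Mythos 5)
Your proof is correct and follows essentially the same route as the paper's: an $\eps/2$-net of the finite-dimensional compact space $M(U^{r,k})$, one representative $k$-coloring per (relevant) net point, the common refinement (your product coloring) of these representatives so that each is recoverable by a projection $\alpha$, and a further refinement by a proper coloring of the $r$-th power of $G$ to enforce the distance condition. The only cosmetic difference is that you take exact minimizers over all net points while the paper picks $\eps/2$-close representatives only for net points near an achievable distribution; both work since $K(k,G)$ is finite.
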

\begin{proof}
The space $M(U^{r,k})$ is a bounded dimensional compact set with the
topology generated by $\dvar$. Let $N$ be an $\eps/2$-net in
$M(U^{r,k})$ in $\dvar$. Let $N_G$ be the subset of points in $N$
that are at most $\eps/2$ far from a point of the form $P_{G,r}[g]$
for some $g\in K(k,G)$. For each $a \in N_G$, we choose a
representative $x_a=P_{G,r}[g_a]$ such that $\dvar(a,x_a)\leq
\eps/2$. It is clear that for every $g\in K(k,G)$, there is a point
$x_a$ such that $d_{\text{\rm var}}(P_{G,r}[g],x_a)\leq\eps$. Let $f$
be the common refinement of all the partitions $\{g_a\}_{a\in N_G}$.
Clearly $f$ has a bounded number of partition sets in terms of $r,k,
\eps$ and $d$ and it satisfies the second condition.

Now we further refine $f$ to satisfy the first condition. Let $f'$ be
a proper coloring of the graph $G$ with $(d+1)^r$ colors in which
every two vertices in distance at most $r$ receive different colors.
The common refinement $q$ of $f$ and $f'$ satisfies both conditions.
\end{proof}

\section{Proof of the main theorem}

Now we introduce the space $X$ which will serve as a universal Borel
space for the limit graphings of sequences of finite graphs with all
degrees at most $d$. Consider the compact space
$C=\prod_{k,r,n}[t_{r,k,1/n}]$ with the product topology where
$t_{r,k,1/n}$ are defined according to Lemma~\ref{lem:keyGraphing}.
We denote by $X$ the compact space $\Gf(C)$ and by $E\subset X\times
X$ the set of edges $((G,o_1,c),(G,o_2,c))$ such that $(o_1,o_2)$ is
an edge in $G$ (See Remark~\ref{rem:UniversalGraphing}). Let
$q:X\rightarrow C$ be the function defined as $q:(G,o,c) \mapsto
c(o)$. Furthermore for $r,k,n\in\mathbb{N}$,  define the coloring
$q_{r,k,n}:X\rightarrow [t_{r,k,1/n}]$ as the composition of $q$ with
the projection  to the coordinate $(r,k,n)$ in $C$.

Let $(G_i)_{i=1}^\infty$ be a local-global  convergent sequence of
graphs with all degrees at most $d$. For each $G_i$ and triple
$(r,k,n)\in\mathbb{N}^3$, we choose a coloring
$q^i_{r,k,n}:V(G_i)\rightarrow [t_{r,k,1/n}]$ guaranteed by
Lemma~\ref{lem:keyGraphing}. Let $q_i:~V(G_i)\rightarrow C$ be
defined as $\prod_{r,k,n}\{q^i_{r,k,n}(v)\}\in C$. As described in
Section~\ref{benschramm}, each graph $G_i$ together with the coloring
$q_i$ defines a probability measure $\mu_i$ on $X$ by putting the
root on a random vertex of $G_i$ and keeping only the connected
component of the root.

By choosing a subsequence from $(G_i)_{i=1}^\infty$ we can assume
that the sequence $\{\mu_i\}_{i=1}^\infty$ weakly converges to a
probability distribution $\mu$ on $X$. Our goal is to show that the
Borel graph $(X,E)$ with the measure $\mu$ is a graphing which
represents the local-global limit of $(G_i)_{i=1}^\infty$.

Let us first observe that for a $\mu$-random element $(G,o,c)$ in
$(X,\mu)$, with probability one, the vertex labels $\{c(v) : v \in
V(G)\}$ are all different. This follows from the fact that the
colorings $q^i_{r,k,n}$ separate points in $G_i$ that are closer than
$r+1$, and that this property is preserved in the limit. This means
that if $v,w\in V(G)$ are of distance $r$, then with probability one
their colors projected to the coordinate $(r,k,n)$ (where $k,n$ are
arbitrary) are different.

\begin{lemma}
The measurable graph $(X,E,\mu)$ is a graphing.
\end{lemma}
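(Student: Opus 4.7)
The goal is to check the three conditions in Definition~\ref{defgraphing}: $(X,\mu)$ is a Polish probability space, $E\subseteq X\times X$ is Borel with vertex-degrees at most $d$, and the measure-preservation axiom \eqref{eq:degreeCond} holds. The first two are immediate from the setup: $X=\Gf(C)$ is compact, Hausdorff, and second countable (hence Polish); $\mu$ is a Borel probability by the weak-convergence construction; the edge relation $E$ is Borel (in fact closed, since adjacency of the two roots in the common underlying labeled graph is detected on their radius-$1$ neighborhoods); and the graph-theoretic degree at $(G,o,c)$ is $\deg_G(o)\le d$. The only substantive content is \eqref{eq:degreeCond}.

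My plan for \eqref{eq:degreeCond} is to verify the two hypotheses of Remark~\ref{rem:UniversalGraphing} --- involution-invariance of $\mu$, and $\mu$-almost-sure triviality of the automorphism group of the random rooted connected component --- after which the remark yields the graphing axiom directly. For involution-invariance: each finite graph $G_i$ with its decoration $q_i$ defines the measure $\mu_i$ on $\Gf(C)$, which is involution-invariant by the discussion in Section~\ref{benschramm}; since the set of involution-invariant measures is closed in the weak topology, the limit $\mu=\lim_i\mu_i$ inherits this property. For the triviality of automorphisms: the separating property of the colorings $q^i_{r,k,n}$ furnished by Lemma~\ref{lem:keyGraphing} says that any two distinct vertices of $G_i$ at distance at most $r$ receive different labels in the $(r,k,n)$-coordinate, and, as observed in the paragraph immediately preceding this lemma, this feature is preserved in the weak limit. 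Consequently, with $\mu$-probability one, the decoration $c$ is injective on $V(G)$, so any automorphism of $(G,c)$ fixes the labeling pointwise and must be the identity.

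The main obstacle is making Remark~\ref{rem:UniversalGraphing} precise, i.e.\ showing that these two facts really imply \eqref{eq:degreeCond}. I would proceed by defining the ``edge measure'' $\eta^*$ on $X\times X$ via $\eta^*(A\times B)=\int_A e(x,B)\,d\mu(x)$ and verifying its symmetry, which is equivalent to \eqref{eq:degreeCond} by Fubini. Almost-sure injectivity of $c$ guarantees that the natural involution $\alpha$ on edge-rooted labeled graphs (swapping the two endpoints of the distinguished edge) is unambiguously defined on a full-measure set, so involution-invariance of $\mu$ translates, under the map sending an edge-rooted graph to the pair of its two rootings in $X$, into $\alpha$-invariance of the pushforward, which is precisely the symmetry of $\eta^*$. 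To rigorize the passage to the limit I would verify symmetry first on a $\pi$-system of basic cylinders (pairs of radius-$r$ neighborhoods with prescribed decoration patterns on open sets whose boundaries have $\mu$-measure zero), where in each finite approximant $\mu_i$ the corresponding $\eta_i^*$-masses are literally counts of directed edges of $G_i$ with a prescribed local pattern and hence symmetric, and then extend to all Borel sets by the monotone class theorem.
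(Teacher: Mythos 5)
Your proposal is correct and, once the appeal to Remark~\ref{rem:UniversalGraphing} is unwound in your final paragraph, it coincides with the paper's own argument: define $\eta^*(A\times B)=\int_A e(x,B)\,d\mu$, observe that for the finite approximants $\eta_i^*$ is symmetric because (thanks to the injectivity of the labelings $q_i$ on each component) both $\eta_i^*(A\times B)$ and $\eta_i^*(B\times A)$ count the same edges of $G_i$, pass to the limit by weak convergence on a generating family of clopen sets, and extend by a monotone class argument. (Only a cosmetic quibble: adjacency in $E$ is not detected on radius-$1$ neighborhoods alone but as a countable intersection of finite-radius consistency conditions; this still makes $E$ Borel, so nothing is affected.)
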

\begin{proof}
Let us introduce the measures $\{\eta_i^*\}_{i=1}^\infty$, similarly
as in Section~\ref{graphing}, by
\[
\eta_i^*(A\times B)=\int_A e(x,B) d\mu_i(x),
\]
where $A,B\subseteq X$ are measurable, and $e(x,B)$ is the number of
edges $(x,y)\in E$ with $y\in B$. We define $\eta^*$ analogously as
$\eta^*(A\times B)=\int_A e(x,B) d\mu(x)$.

Assume that $A,B\subset X$ are open-closed sets. The weak convergence
of $\{\mu_i\}_{i=1}^\infty$ implies that
$\lim_{i\to\infty}\eta_i^*(A\times B)=\eta^*(A\times B)$ and
$\lim_{i\to\infty}\eta_i^*(B\times A)=\eta^*(B\times A)$. Note that
$\eta^*_i(A\times B)=\eta^*_i(B\times A)$, since both are equal (up
to normalization by $|V(G_i)|$) to the number of edges between the
sets $\{v|(G_i,v,q_i)\in A\}$ and $\{v|(G_i,v,q_i)\in B\}$. Here we
used the fact that the vertex labels $q_i(\cdot)$ are all different
and thus automorphisms of $G_i$ cannot cause any problems. We obtain
that $\eta^*(B\times A)=\eta^*(A\times B)$, and since such product
sets generate the whole $\sigma$-algebra on $X\times X$, the proof is
complete.
\end{proof}

\begin{lemma}\label{prlem1}
The probability distributions $P_{G_i,r}[q^i_{r,k,n}]$ converge to
$P_{\GG,r}[q_{r,k,n}]$ {as $i\to\infty$} for every fixed triple
$r,k,n\in\mathbb{N}$.
\end{lemma}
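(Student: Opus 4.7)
The plan is to express both $P_{G_i,r}[q^i_{r,k,n}]$ and $P_{\GG,r}[q_{r,k,n}]$ as pushforwards of $\mu_i$ and $\mu$ under a single continuous map, and then invoke the weak convergence $\mu_i \to \mu$ that was extracted at the start of the section. Set $k' := t_{r,k,1/n}$ and define $\phi : X \to U^{r,k'}$ by
\[
\phi(G, o, c) \;:=\; \bigl(N_{G,r}(o),\, o,\, q_{r,k,n}|_{N_{G,r}(o)}\bigr),
\]
i.e.\ the isomorphism class of the rooted $r$-ball around the root, decorated by the $(r,k,n)$-coordinate of the label function $c$.

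First I would verify that $\phi$ is continuous, giving the finite set $U^{r,k'}$ the discrete topology. By construction of the topology on $X = \Gf(C)$ described in Section \ref{benschramm}, a basis is obtained by fixing the rooted isomorphism type of $N_{G,r}(o)$ and prescribing open subsets of $C$ at each vertex of this ball. Since the coordinate projection $C \to [t_{r,k,1/n}]$ is continuous and singletons of $[t_{r,k,1/n}]$ are clopen, the preimage $\phi^{-1}(u)$ of any $u \in U^{r,k'}$ is a finite union (over the rooted isomorphisms realizing $u$) of basic clopen sets, hence clopen.

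Next I would identify the two distributions as pushforwards. For the finite side, $\mu_i$ is defined by picking a uniformly random $v \in V(G_i)$ and returning its connected component rooted at $v$ labeled by $q_i$; since the $(r,k,n)$-coordinate of $q_i$ is exactly $q^i_{r,k,n}$, we get $\phi_*\mu_i = P_{G_i,r}[q^i_{r,k,n}]$. For the limit side, the key input is the observation made in the excerpt that for $\mu$-a.e.\ $(G,o,c) \in X$ the labels $\{c(v) : v \in V(G)\}$ are pairwise distinct. Hence $(G,c)$ has no nontrivial automorphism, and the map $v \mapsto (G,v,c)$ is an injective graph isomorphism from $G$ onto the connected component of $(G,o,c)$ in the Borel graph $(X,E)$. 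Under this identification the $r$-neighborhood of $(G,o,c)$ in $\GG$ with the coloring $q_{r,k,n}$ is precisely $\phi(G,o,c)$, so $\phi_*\mu = P_{\GG,r}[q_{r,k,n}]$.

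Finally, weak convergence $\mu_i \to \mu$ together with continuity of $\phi$ yields $\phi_*\mu_i \to \phi_*\mu$ weakly; on the finite space $U^{r,k'}$ weak convergence is the same as convergence in $\dvar$, which is exactly the claim. The one nontrivial step is the identification $\phi_*\mu = P_{\GG,r}[q_{r,k,n}]$: here one must rule out that a nontrivial coloring-preserving automorphism of $(G,c)$ collapses distinct vertices into the same point of $\Gf(C)$ and thereby distorts the $r$-neighborhood seen inside the graphing. This is precisely why the labels $q^i_{r,k,n}$ were forced to separate vertices at distance $\le r$ in Lemma \ref{lem:keyGraphing}; that separation survives in the limit and makes the argument go through cleanly.
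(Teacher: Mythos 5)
Your argument is correct and follows essentially the same route as the paper: both proofs rest on the observation that $\mu$-almost surely the labels of $(G,o,c)$ are pairwise distinct, so that $v\mapsto(G,v,c)$ identifies $(G,o,c)$ with its component in the graphing and the colored $r$-ball read off from $x$ agrees with the one seen in $\GG$, after which weak convergence $\mu_i\to\mu$ finishes the proof. Your explicit verification that the ``colored $r$-ball'' map is continuous (with clopen fibers) is a detail the paper leaves implicit, but it is the same argument.
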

\begin{proof}
Pick a $\mu$-random point $x=(G,o,c) \in X$. Let the rooted graph
$\GG_x$ be the connected component of $x$ in the graphing $\GG$
rooted at $x$. There is a natural vertex coloring on $\GG_x$ which is
the restriction of the function $q$ to the vertices of $\GG_x$. So
$\GG_x$ can be regarded as an element in $X$. We claim that with
probability one $x=(G,o,c)$ is isomorphic (in a root and label
preserving way) to $(\GG_x,q|_{\GG_x})$. Indeed with probability one
all the vertex labels of $G$ are different, and in this case the map
given by $v\mapsto(G,v,c)$ defines a decoration-preserving
isomorphism between $(G,o,c)$ and $\GG_x$. (The fact that the vertex
labels in $G$ are all different guarantees that the map is one to
one.)

We conclude that the probability distribution
$P_{\GG,r}[q_{r,k,n}]$ is the same as the distribution of
$(N_{G,r}(o),c_{r,k,n})$ where $(G,o,c)$ is a $\mu$-random element in $X$, and
$c_{r,k,n}$ is the
projection of $c$ to the coordinate $(r,k,n)$. The lemma now follows from the
weak
convergence of $\{\mu_i\}_{i=1}^\infty$ to $\mu$.
\end{proof}

\begin{lemma}
For every $r,k\in\mathbb{N}$ {and $\eps>0$} there is an index $i_0$
such that for every $i\geq i_0$ and $c\in K(k,G_i)$, there is a
$k$-coloring $c'$ of $X$ such that $\dvar(P_{G_i,r}[c],
P_{\GG,r}[c']) \le\eps.$
\end{lemma}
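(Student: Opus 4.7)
The plan is to combine the Regularization Lemma (Lemma \ref{lem:keyGraphing}) with the convergence established in Lemma \ref{prlem1}. First I would fix an integer $n$ with $1/n \le \eps/2$ and focus on the ``universal'' regularizing colorings $q^i_{r,k,n}$ of $G_i$ together with their graphing counterpart $q_{r,k,n}:X\to[t_{r,k,1/n}]$. The palette $[t_{r,k,1/n}]$ is finite and its size depends only on $r,k,n$, not on $i$, which is what allows us to transport colorings between $G_i$ and $\GG$.

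Given any $c\in K(k,G_i)$, the Regularization Lemma produces a map $\alpha:~[t_{r,k,1/n}]\to[k]$ such that
$$
\dvar\!\bigl(P_{G_i,r}[c],\, P_{G_i,r}[\alpha\circ q^i_{r,k,n}]\bigr)\;\le\; 1/n \;\le\; \eps/2.
$$
Note that $\alpha$ may depend on $c$ and on $i$, but it ranges over a finite set of bounded size. I would then define $c':=\alpha\circ q_{r,k,n}$, a measurable $k$-coloring of $X$, as the candidate coloring on the graphing side.

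The key step is the bound
$$
\dvar\!\bigl(P_{G_i,r}[\alpha\circ q^i_{r,k,n}],\, P_{\GG,r}[\alpha\circ q_{r,k,n}]\bigr)\;\le\; \eps/2
$$
for all sufficiently large $i$, uniformly in $\alpha$. This holds because $P_{G_i,r}[\alpha\circ q^i_{r,k,n}]$ is the pushforward of $P_{G_i,r}[q^i_{r,k,n}]$ under the fixed map $U^{r,t_{r,k,1/n}}\to U^{r,k}$ induced by vertex-wise application of $\alpha$ (and similarly on the graphing side), and pushforwards under a common map never increase total variation distance. Lemma \ref{prlem1} supplies $P_{G_i,r}[q^i_{r,k,n}]\to P_{\GG,r}[q_{r,k,n}]$ in $\dvar$, so a single threshold $i_0=i_0(r,k,n)$ works for every $\alpha$ simultaneously. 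Combining the two estimates by the triangle inequality gives $\dvar(P_{G_i,r}[c],P_{\GG,r}[c'])\le\eps$, as required.

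I do not expect a serious obstacle: the technical difficulty has already been absorbed into the Regularization Lemma (finitely many ``representative'' colorings) and into Lemma \ref{prlem1} (weak convergence of the decorated sampling measures). The only subtlety worth highlighting is that the convergence estimate must be uniform in the choice of $\alpha$; this is automatic because the $\alpha$-dependence enters only through the pushforward step, which is contractive for $\dvar$.
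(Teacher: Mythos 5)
Your proof is correct and follows essentially the same route as the paper: fix $n\ge 2/\eps$, apply the Regularization Lemma to get $\alpha$ with $\dvar(P_{G_i,r}[c],P_{G_i,r}[\alpha\circ q^i_{r,k,n}])\le\eps/2$, use Lemma \ref{prlem1} together with the fact that pushing forward under the map induced by $\alpha$ does not increase total variation distance, and conclude by the triangle inequality with $c'=\alpha\circ q_{r,k,n}$. Your explicit remark that the contractivity of the pushforward makes the estimate uniform in $\alpha$ is exactly the point the paper compresses into ``the definition of the total variation distance implies.''
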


\begin{proof}
Let $n\ge 2/\eps$. By Lemma~\ref{prlem1} there is an index $i_0$ such
that
\begin{equation}\label{claim1eq}
\dvar(P_{G_i,r}[q^i_{r,k,n}],
P_{\GG,r}[q_{r,k,n}])\leq \frac{\eps}{2}
\end{equation}
for every index $i\geq i_0$. Let $i\geq i_0$ be arbitrary, and let
$c\in K(k,G_i)$ be a $k$-coloring of $G_i$. Then by
Lemma~\ref{lem:keyGraphing} there is a map
$\alpha:~[t_{r,k,\eps/2}]\rightarrow[k]$ such that
\[
\dvar(P_{G_i,r}[c],P_{G_i,r}[\alpha\circ q^i_{r,k,n}])\leq\frac1n \le
\frac{\eps}{2}.
\]
The definition of the total variation distance and (\ref{claim1eq})
imply that
\[
\dvar(P_{G_i,r}[\alpha\circ q^i_{r,k,n}],
P_{\GG,r}[\alpha\circ q_{r,k,n}])\leq \frac{\eps}{2}.
\]
Hence $c'=\alpha\circ q_{r,k,n}$ satisfies the required condition.
\end{proof}

\begin{lemma}
For every coloring $c\in K(k,\GG)$, $r\in\mathbb{N}$ and {$\eps>0$}
there is an index $i_0$ such that for every $i\geq i_0$, there is a
coloring $c'\in K(k,G_i)$ with $\dvar( P_{G_i,r}[c'], P_{\GG,r}[c])
\le\eps.$
\end{lemma}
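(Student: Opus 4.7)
The plan is to mirror the previous lemma in the opposite direction: approximate the Borel coloring $c$ of the graphing by a continuous coloring of $X=\Gf(C)$, pull this approximation back to each $G_i$ along the embedding $\Phi_i:~v\mapsto(G_i,v,q_i)$, and conclude via the weak convergence $\mu_i\to\mu$. Notice that this bypasses any regularization lemma on $\GG$, so there is no circular appeal to the preceding lemma.

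First I would replace $c$ by a continuous approximation. Since $C$ is a countable product of finite discrete spaces it is zero-dimensional, and the basis of the topology on $X$ described in Section~\ref{benschramm} consists of clopen cylinder sets (one checks that basic sets with clopen $U_v$ are both open and closed); in particular $X$ itself is zero-dimensional. By the regularity of $\mu$, together with the fact that finite Boolean combinations of clopen sets remain clopen, any Borel map $c:~X\to[k]$ admits a continuous approximation $\hat{c}:~X\to[k]$ (one whose level sets are all clopen) with $\mu\{x:c(x)\ne\hat{c}(x)\}\le\delta$ for any prescribed $\delta>0$. The graphing identity~\eqref{eq:degreeCond}, iterated $r$ times with the degree bound $d$, yields $\mu(N_{\GG,r}(E))\le(d+1)^r\mu(E)$ for every measurable $E\subseteq X$, and applying this with $E=\{c\ne\hat{c}\}$ gives
\[
\dvar\bigl(P_{\GG,r}[c],\,P_{\GG,r}[\hat{c}]\bigr)\le(d+1)^r\delta,
\]
which I arrange to be at most $\eps/2$ by taking $\delta$ sufficiently small.

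Next I would pull $\hat{c}$ back to each $G_i$. Because $X$ is compact, the clopen level sets of $\hat{c}$ are finite unions of basic cylinder sets, so $\hat{c}(x)$ is determined by the isomorphism type of the rooted $r'$-ball around the root of $x$ together with the values of finitely many coordinates $q_{r_s,k_s,n_s}$ on that ball, for some fixed $r'$ and finite index set. Consequently $\hat{c}_i(v):=\hat{c}\bigl((G_i,v,q_i)\bigr)$ is a well-defined $[k]$-coloring of $V(G_i)$. For each fixed $(H,o,\ell)\in U^{r,k}$, the indicator of the event that the $\hat{c}$-colored $r$-ball around the root equals $(H,o,\ell)$ is a continuous function on $X$, since it depends only on the rooted $(r+r')$-ball around the root together with finitely many $q$-coordinates on that ball; hence weak convergence $\mu_i\to\mu$ gives $P_{G_i,r}[\hat{c}_i](H,o,\ell)\to P_{\GG,r}[\hat{c}](H,o,\ell)$. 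Since $U^{r,k}$ is finite this yields $\dvar(P_{G_i,r}[\hat{c}_i],P_{\GG,r}[\hat{c}])\le\eps/2$ for all sufficiently large $i$, and the triangle inequality then gives $\dvar(P_{G_i,r}[\hat{c}_i],P_{\GG,r}[c])\le\eps$, so $c':=\hat{c}_i$ is the required coloring.

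The main obstacle is the approximation step, specifically producing $\hat{c}$ as an honest $[k]$-valued continuous function rather than merely approximating each fiber $c^{-1}(j)$ individually by a clopen set. Here the zero-dimensionality of $X$ is essential, because it lets us patch the individual clopen approximations into a single clopen partition by Boolean operations. Once $\hat{c}$ is in hand, the remaining ingredients --- the neighborhood-expansion bound from~\eqref{eq:degreeCond}, the locality of $\hat{c}$ coming from compactness of $X$, and the weak convergence $\mu_i\to\mu$ already used in the proof of Lemma~\ref{prlem1} --- are routine.
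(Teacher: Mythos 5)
Your proposal is correct and follows essentially the same route as the paper: approximate the Borel coloring $c$ by a continuous (locally determined) coloring, transfer it to each $G_i$ via $v\mapsto(G_i,v,q_i)$, and conclude from the weak convergence $\mu_i\to\mu$. Your explicit use of zero-dimensionality to build the clopen partition and the $(d+1)^r$ neighborhood-expansion bound are just worked-out versions of steps the paper states without detail.
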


\begin{proof}
Let $c:X\rightarrow [k]$ be a Borel coloring. Then for every
$\delta>0$, there is a continuous coloring $c_\delta:~X\rightarrow
[k]$ such that $|\mu(c^{-1}(a) \triangle
c_\delta^{-1}(a))|\leq\delta$  for all $1\leq a\leq k$. Taking
$\delta$ to be sufficiently small, we have
\begin{equation} \label{eq:firstApprox}
\dvar( P_{\GG,r}[c_\delta],P_{\GG,r}[c]) \le \frac{\eps}{2}.
\end{equation}

Let the graphing $\GG_i$ be the same as the graphing $\GG$ with the
only difference that the measure $\mu$ is replaced by $\mu_i$. Since
$\{\mu_i\}_{i=1}^\infty$ converges weakly to $\mu$ and $c_\delta$ is
continuous, there is an index $i_0$ such that if $i \geq i_0$, then
\begin{equation} \label{eq:secondApprox}
\dvar(P_{\GG_i,r}[c_\delta], P_{\GG,r}[c_\delta] ) \le \frac{\eps}{2}.
\end{equation}
The coloring $c_\delta$ induces a coloring $f^i_\delta$ on $G_i$
which assigns to every vertex $v\in V(G_i)$ the $c_\delta$ color of
the rooted graph $(G_i,v,q_i)\in X$. Then we have
$P_{G_i,r}[f^i_\delta] \equiv P_{\GG_i,r}[c_\delta]$. {Together with
(\ref{eq:firstApprox}) and (\ref{eq:secondApprox}), this completes
the proof.}
\end{proof}

\section{Bernoulli graphings and Bernoulli graph
sequences}\label{SEC:BERN}

Probably the most fundamental graphing construction is the Bernoulli
graphing corresponding to an involution-invariant measure. These
graphings are closely related to factor of i.i.d. processes and local
algorithms. In this chapter we explain their role in local-global
convergence.

\begin{definition}[Bernoulli graphings]
Let $\mu$ be an involution-invariant measure on $\Gf$. Let $\nu$ be
the probability measure on $\Gf[0,1]$ produced by putting independent
random {weights} from $[0,1]$ on the nodes of a $\mu$-random graph.
(Note that different choices of the weights can lead to the same
point of $\Gf[0,1]$, if they can be transformed into each other by an
automorphism of the $\mu$-random rooted graph.) The triple
$(\Gf[0,1],\nu,\mathcal{E}[0,1])$ as defined in
Remark~\ref{rem:UniversalGraphing} will be called the \emph{Bernoulli
graphing} corresponding to $\mu$, and denoted by $\BB_\mu$.
\end{definition}

It is not hard to see that $\BB_\mu$ is a graphing and it represents
the involution-invariant distribution $\mu$ (Elek \cite{Elek}).

\begin{remark}\label{BERN}
Perhaps it would be more natural to decorate the nodes of the
$\mu$-random graph by independent bits, or more generally, by colors
from $[k]$ for some fixed $k\ge2$. This would yield an
involution-invariant distribution on $\Gf[k]$, but the graph
$(\Gf[k],\mathcal{E}[k])$ together with this distribution would not necessarily
form a graphing.
\end{remark}

We define the Bernoulli graphing $\BB_\GG$ corresponding to an
arbitrary graphing $\GG$ as the Bernoulli graphing defined by the
involution-invariant distribution induced by $\GG$ on $\Gf$. Clearly $\GG$ and
$\BB_\GG$ are locally equivalent.

\begin{example}
A simple example for a Bernoulli graphing is provided by the
involution-invariant measure which is concentrated on a single
$d$-regular rooted tree. Let $T$ denote the rooted $d$-regular tree,
and let $(X,\nu)$ be the probability space in which we put
independent random {weights} from $[0,1]$ on the vertices of $T$. Two
points of $X$ are connected in $\GG$ if they can be obtained from
each other by moving the root to a neighboring vertex. It seems to be
an interesting problem to decide whether the sets $Q_{\GG,r,k}$ are
all closed (see also Question~\ref{ques:closedness}).
\end{example}

The following is a related construction. For every graphing $\GG$ on
the probability space $(X,\nu)$, we define its \emph{Bernoulli lift}
$\GG^+$ as follows. The underlying set $X^+$ of $\GG^+$ will be pairs
$(x,\xi)$, where $x\in X$ and $\xi:~V(\GG_x)\to[0,1]$ assigns weights
from $[0,1]$ to the vertices of the connected component $\GG_x$
rooted at $x$. We connect $(x,\xi)$ to $(y,\upsilon)$ if $y$ is a
neighbor of $x$ and $\xi=\upsilon$. (Note that if $y$ is a neighbor
of $x$, then $\GG_x=\GG_y$.) The measure on $X^+$ is defined as
follows. To generate a random element of $X^+$, one picks a
$\nu$-random point $x\in X$, and then assigns independent random
weights $\xi(u)$ to the nodes $u$ of $\GG_x$.

We define two maps $\phi:~V(\GG^+)\to V(\GG)$ and $\psi:~V(\GG^+)\to
V(\BB_\GG)$ by $\phi(x,\xi)=x$ and $\psi(x,\xi)=
\bigl(\GG_x,\xi\bigr)$. It is easy to check that the maps $\phi$ and
$\psi$ are local isomorphisms. This implies that graphing $\GG$ is
locally equivalent to its Bernoulli lift $\GG^+$ as well as its
Bernoulli graphing $\BB_\GG$.

Our main goal in this section is to describe the relationship between
$\GG$, $\BB_\GG$ and $\GG^+$ from the point of view of local-global
equivalence.

\begin{definition}
A graphing is called \emph{atom-free} if its underlying probability space
contains no mass points.
\end{definition}

\begin{remark}
\label{rem:continuity} Note that no finite graph corresponds to an
atom-free graphing. Using the graphing property
\eqref{eq:degreeCond}, it is easy to see that if a graphing contains
an atom, then this belongs to a finite component. If $\GG$ is the
local limit of a sequence of connected graphs $(G_n)_{n=1}^\infty$
with $V(G_n)| \to \infty$, then all its components are infinite, and
hence it is atom-free. On the other hand, if the union of finite
components of a graphing has positive weight, then merging isomorphic
finite components we get atoms. Furthermore, if $\GG$ is the
local-global limit of graphs $(G_n)_{n=1}^\infty$ (not necessarily
connected) with $|V(G_n)| \to \infty$, then $\GG$ is atom-free. This
follows from the observation that a graphing is atom-free if and only
if its points have a Borel $k$-coloring with equal color classes for
every $k$.
\end{remark}

The following is our main result in this section.
\begin{theorem}\label{THM:BERLIFT}
Every atom-free graphing is local-global equivalent to its Bernoulli lift.
\end{theorem}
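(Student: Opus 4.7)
The direction $\GG\prec\GG^+$ is immediate: the map $\phi:V(\GG^+)\to V(\GG)$, $(x,\xi)\mapsto x$, shown just before the theorem to be a local isomorphism, forces $\GG\prec\GG^+$ by the principle quoted in the paper. The real content is the reverse inclusion $\GG^+\prec\GG$: for every Borel coloring $c:V(\GG^+)\to[k]$, every $r\ge 1$, and every $\eps>0$, I need to exhibit a Borel coloring $c':V(\GG)\to[k]$ with $\dvar(P_{\GG,r}[c'],P_{\GG^+,r}[c])\le\eps$.

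First I would reduce $c$ to a cylinder form. Any Borel function on $V(\GG^+)$ can be approximated in $L^1(\nu^+)$ by a coloring
\[
\tilde c(x,\xi)\;:=\;F\!\bigl(N_{\GG,R_0}(x),\,\lfloor m\,\xi\rfloor|_{N_{\GG,R_0}(x)}\bigr),
\]
for some $R_0,m\in\mathbb{N}$ and a deterministic combinatorial function $F$, by the density of cylinder functions in $L^1$. Choosing $R_0,m$ large, $\dvar(P_{\GG^+,r}[c],P_{\GG^+,r}[\tilde c])\le\eps/2$ (an $L^1$-perturbation of size $\delta$ in the coloring perturbs $r$-statistics by a factor bounded in $d$ and $r$). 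Writing $R:=R_0+r$, the $r$-statistics of $\tilde c$ depend only on $\xi|_{N_{\GG,R}(x)}$.

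The heart of the argument is the construction of a Borel $\zeta:V(\GG)\to[m]$ that behaves like an i.i.d.\ uniform $[m]$-coloring on every $R$-neighborhood. I would invoke Kechris--Solecki--Todorcevic to fix a Borel proper $2R$-distance coloring $p:V(\GG)\to[M]$, partitioning $X=A_1\sqcup\cdots\sqcup A_M$. Inductively on $i$, I would partition $A_i$ by the countable-range map
\[
v\;\mapsto\;\Bigl(p\text{-labeled iso type of }N_{\GG,2R}(v),\;\zeta|_{N_{\GG,2R}(v)\cap(A_1\cup\cdots\cup A_{i-1})}\Bigr),
\]
and split each resulting Borel piece into $m$ equal-measure parts using the atomless induced measure, assigning labels $1,\dots,m$. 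Using radius $2R$ (rather than $R$) is crucial: because $p$ separates all pairs within distance $\le 2R$, the $\sigma$-algebra built at each stage contains, as a sub-algebra, the $\sigma$-algebra generated by $(N_{\GG,R}(u),\zeta|_{N_{\GG,R}(u)\cap(A_1\cup\cdots\cup A_{i-1})})$ for any root $u$ with $v\in N_{\GG,R}(u)$. A chain-rule induction across the $M$ stages then yields that, for a $\nu$-random root $u$, the joint distribution of $\zeta|_{N_{\GG,R}(u)}$ given the iso type of $N_{\GG,R}(u)$ is exactly uniform on $[m]^{|N_{\GG,R}(u)|}$.

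Setting $c'(v):=F\bigl(N_{\GG,R_0}(v),\zeta|_{N_{\GG,R_0}(v)}\bigr)$, the uniform joint property of $\zeta$ together with the local equivalence of $\GG$ and $\GG^+$ gives $P_{\GG,r}[c']=P_{\GG^+,r}[\tilde c]$, and the triangle inequality closes the estimate. The main obstacle is the construction of $\zeta$: atom-freeness is exactly the right hypothesis since any atomic component would obstruct the required equal-measure splittings, and the combinatorial subtlety is the use of radius $2R$ together with the $p$-decorated iso type as conditioning data, which makes the inner $\sigma$-algebra at each stage fine enough for the outer chain-rule $\sigma$-algebra to embed into it.
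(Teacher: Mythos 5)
Your architecture is the same as the paper's (reduce the coloring of $\GG^+$ to a ``cylinder'' form, build a pseudo-random coloring on $\GG$, simulate), and the easy direction $\GG\prec\GG^+$ via the local isomorphism $\phi$ is fine. But there are two genuine gaps in the hard direction, the second of which is fatal to your construction as stated.

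First, the reduction step. Cylinder functions of the form $F(\hbox{iso type of }N_{\GG,R_0}(x),\lfloor m\xi\rfloor)$ with $F$ a ``deterministic combinatorial function'' are \emph{not} dense in $L^1(\nu^+)$: a Borel coloring of $\GG^+$ may depend on the base point $x$ through an arbitrary Borel subset $B\subseteq X$ that is not determined by any local isomorphism type (e.g.\ $c(x,\xi)=\one_B(x)$ on a graphing all of whose components are isomorphic). The correct generating family is $\xi_{m,s}^{-1}(y)\cap\phi^{-1}(B)$ with $B$ Borel in $X$, which is why the paper's approximation lemma carries along an auxiliary Borel $l$-coloring $h$ of $\GG$, and why its quasirandom coloring must be quasirandom \emph{relative to} $h$ (the distribution of $(c\times h)$-colored neighborhoods must match $\mu_{r,h,k}$). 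Your $\zeta$ is only required to look random relative to the $p$-labeled iso types, so even if it existed it would not suffice to simulate a general $c$.

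Second, and more seriously, the claim that the stagewise equal-$\nu$-measure splitting yields \emph{exactly} (or even approximately) uniform joint distribution of $\zeta|_{N_{\GG,R}(u)}$ for a $\nu$-random root $u$ is unjustified. In the chain-rule step you need
\[
\Pr\bigl[\,v_t(u)\in C_s \mid u\in v_t^{-1}(C)\cap E'\,\bigr]=\tfrac1m ,
\]
where $v_t(u)$ is the vertex at a fixed position of $N_{\GG,R}(u)$, $C=C_1\sqcup\dots\sqcup C_m$ is your split cell, and $E'$ is the remaining conditioning. This requires the $C_s$ to have equal mass under the \emph{pullback} measure $\nu\circ v_t^{-1}$ restricted to $E'$, not under $\nu$. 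The root-to-position map is not measure preserving for a general graphing (it need not even be injective), and different roots $u$ and different conditioning events induce different pullback measures on the same cell $C$, so no single split can serve them all. This is exactly the obstruction the paper's Lemma on quasirandom colorings avoids by going probabilistic: it identifies $X$ with the Cantor set, takes a fine finite projection $\pi_j$ that separates the points of a typical union of $r$-neighborhoods, and shows via the law of large numbers (sampling $n$ independent roots) that a uniformly \emph{random} coloring $g_j\circ\pi_j$ is $(r,\eps)$-quasirandom with positive probability; one then fixes a good $g_j$. I would replace your inductive splitting by this first-moment argument (or supply a proof that your splitting can be made simultaneously uniform for all pullback measures, which I do not believe is possible in general). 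Atom-freeness is indeed where you say it is needed, but it is used to get a measure isomorphism with the Cantor set rather than to perform equal splits.
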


The map $\psi:~V(\GG^+)\to V(\BB_\GG)$ defined above is a local
isomorphism from $\GG^+$ to $\BB_\GG$. Thus we have the relation
$\BB_\GG\prec\GG^+$, which implies by Theorem \ref{THM:BERLIFT}:

\begin{corollary}[Minimality of Bernoulli graphings]\label{cor:bermin}
For every atom-free graphing $\GG$, we have $\BB_\GG\prec\GG$.
\end{corollary}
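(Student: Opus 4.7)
The plan is to deduce the corollary from two ingredients already at hand, chained by transitivity of $\prec$. First I would observe that the map $\psi:~V(\GG^+)\to V(\BB_\GG)$ defined just before Theorem~\ref{THM:BERLIFT} by $\psi(x,\xi)=(\GG_x,\xi)$ is a local isomorphism from $\GG^+$ to $\BB_\GG$. It is measure-preserving by the very construction of $\GG^+$: one picks $x$ according to $\nu$ and then attaches i.i.d.\ $[0,1]$-weights $\xi$ to the vertices of the connected component $\GG_x$, which is exactly the law defining $\BB_\GG$. Moreover, on each connected component of $\GG^+$ it restricts to a graph isomorphism with a component of $\BB_\GG$, since an edge $((x,\xi),(y,\xi))$ in $\GG^+$ is sent to an edge between two rootings $(\GG_x,\xi)$ and $(\GG_y,\xi)$ of the same decorated graph, which is an edge in $\BB_\GG$ by definition of $\mathcal{E}[0,1]$. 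By the rule recorded just after the definition of $\prec$ --- existence of a local isomorphism $\GG_1\to\GG_2$ implies $\GG_2\prec\GG_1$ --- this yields $\BB_\GG\prec\GG^+$.

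Second I would invoke Theorem~\ref{THM:BERLIFT}, the main result of this section: since $\GG$ is atom-free, $\GG$ is locally-globally equivalent to $\GG^+$, so in particular $\GG^+\prec\GG$.

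Finally I would combine these two relations using the transitivity of $\prec$, which is immediate from its definition in terms of componentwise inclusion of the closures $\overline{Q}_{\,\cdot\,,r,k}$: from $\overline{Q}_{\BB_\GG,r,k}\subseteq\overline{Q}_{\GG^+,r,k}\subseteq\overline{Q}_{\GG,r,k}$ for every $r,k$, one reads off $\BB_\GG\prec\GG^+\prec\GG$, and hence $\BB_\GG\prec\GG$. There is no real obstacle in this proof: the entire nontrivial content has been absorbed into Theorem~\ref{THM:BERLIFT}, and the atom-freeness hypothesis enters only through its use there. The verification that $\psi$ is a local isomorphism is a routine unpacking of the definitions of $\GG^+$ and $\BB_\GG$, and the identification of $\psi$-pushforwards requires no measure-theoretic surgery.
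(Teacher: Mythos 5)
Your argument is exactly the paper's: the local isomorphism $\psi:~V(\GG^+)\to V(\BB_\GG)$ gives $\BB_\GG\prec\GG^+$, Theorem~\ref{THM:BERLIFT} gives $\GG^+\prec\GG$ for atom-free $\GG$, and transitivity of $\prec$ (clear from the definition via inclusions of the sets $\overline{Q}_{\,\cdot\,,r,k}$) finishes the proof. Correct, and essentially identical to the paper's derivation.
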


{In other words, Bernoulli graphings are minimal elements in the set
of atom-free graphings in their local equivalence class. A group
theoretical analogue of this fact was obtained by Ab\'ert and Weiss
in \cite{AW}.}

In an algorithmic setting, a Borel coloring of $\GG^+$ can be
considered as a coloring that depends not only on the graph, but also
on a random real number at each point. To be able to imitate this in
$\GG$, we have to construct ``random-like'' colorings on $\GG$. For
technical reasons, we have to deal with graphings that already have a
Borel coloring.

\begin{definition}[Quasirandom colorings]
Let $\GG$ be a graphing on the space $(X,\nu)$, and let $h:~X\to[l]$
be a Borel coloring. Let $\mu_{r,h,k}$ be the probability
distribution on $U^{r,kl}$ obtained from $\nu$ by considering the
$r$-neighborhood of a random element $x\in X$ and decorating its
vertices by random independent elements from $[k]$ (in addition to
the given $l$-coloring $h$). We say that a measurable coloring
$c:~X\rightarrow [k]$ is \emph{$(r,\eps)$-quasirandom} if $d_{\rm
var}(P_{\GG,r}[c\times h],\mu_{r,h,k})\leq\eps$ where  $c\times h$
denotes the $kl$-coloring with pairs of colors $(c(x),h(x))$.
\end{definition}

\begin{lemma}[Existence of quasirandom colorings]\label{quasirand}
Let $\GG$ be a atom-free graphing on the space $(X,\nu)$. Then for
every $k,r,l\in\mathbb{N}$, $\eps>0$ and Borel $l$-coloring $h$,
there is an $(r,\eps)$-quasirandom coloring $c:~X\rightarrow [k]$ of
$(\GG,h)$.
\end{lemma}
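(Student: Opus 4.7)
The plan is to build $c$ as the composition $\beta \circ f$, where $f : X \to [M]$ is a fine Borel proper $2r$-coloring of $\GG$ with equal-measure color classes and $\beta : [M] \to [k]$ is a balanced map chosen by a probabilistic argument so that $c$ is $(r,\eps)$-quasirandom.

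The first ingredient is a Borel proper $2r$-coloring. The graph $\GG^{(2r)}$ on $X$, in which $x \sim y$ iff $0 < d_{\GG}(x,y) \le 2r$, is a Borel graph of maximum degree at most $(d+1)^{2r}$, so by the Kechris--Solecki--Todorcevic Borel chromatic number theorem it admits a Borel proper coloring with at most $(d+1)^{2r}$ colors. Using atom-freeness of $(X,\nu)$, I refine each color class into equal-measure Borel pieces to obtain a Borel proper $2r$-coloring $f : X \to [M]$ with every color class of measure $1/M$, for any chosen $M$ divisible by both $k$ and $(d+1)^{2r}$. The essential consequence is that $f$ is injective on every $r$-neighborhood of $\GG$.

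Next, I draw $\beta : [M] \to [k]$ uniformly at random from the set of balanced maps (each color attained exactly $M/k$ times) and set $c = \beta \circ f$. For a random root $x_0$ with $r$-neighborhood type $\tau \in U^{r,l}$ of size $s+1 \le \Delta := (d+1)^r$, the $f$-labels of the neighborhood vertices are $s+1$ distinct elements of $[M]$, so the joint law of the $c$-labels is the joint law of the random balanced function at $s+1$ distinct inputs. An elementary count shows that this law differs from uniform on $[k]^{s+1}$ by at most $O(\Delta^2/M)$ in total variation. Integrating over $x_0$ and summing over $(\tau,\sigma)\in U^{r,kl}$ yields
\[
\dvar\bigl(\mathbb{E}_\beta\, P_{\GG,r}[c\times h],\ \mu_{r,h,k}\bigr) \le \eps/2
\]
provided $M$ exceeds a constant depending only on $d,r,k,l,\eps$.

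Finally, I establish concentration around this expectation. Viewing a uniformly random balanced $\beta$ as a uniformly random permutation of a fixed balanced tuple in $[k]^M$, swapping two entries of $\beta$ changes $c$ on two color classes of measure $1/M$, and this alters any coordinate $P_{\GG,r}[c\times h](\tau,\sigma)$ by at most $O(d^r/M)$, since only roots whose $r$-neighborhood meets one of these two classes are affected, and that set has measure $O(d^r/M)$. McDiarmid's bounded-differences inequality in its form for uniform random permutations then yields subgaussian tails of width $O(d^r/\sqrt{M})$ for each of the finitely many coordinates indexed by $U^{r,kl}$. A union bound over $U^{r,kl}$ gives, for $M$ large enough in terms of $d,r,k,l,\eps$, a positive probability that $\dvar(P_{\GG,r}[c\times h],\mu_{r,h,k}) \le \eps$, whence a deterministic balanced $\beta$ with this property exists and the associated Borel coloring $c = \beta\circ f$ is $(r,\eps)$-quasirandom. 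The main obstacle is this concentration step: the balancedness constraint on $\beta$ prevents direct application of McDiarmid for independent coordinates and forces the use of its permutation variant.
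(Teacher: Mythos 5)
Your argument is correct in its essentials but follows a genuinely different route from the paper. The paper identifies $(X,\nu)$ with the Cantor set, projects to finitely many coordinates, colors the finite quotient uniformly at random, and then runs a sampling argument: a large i.i.d.\ sample of roots has representative neighborhood statistics by the law of large numbers, the finite projection separates the sampled neighborhoods with high probability, and on a separated sample the pulled-back random coloring is genuinely i.i.d., so averaging yields one good coloring. You instead obtain separation deterministically on \emph{every} $r$-ball via a Borel proper $2r$-coloring (Kechris--Solecki--Todorcevic), compute the expectation of the colored-neighborhood distribution over a random recoloring $\beta$, and extract a good $\beta$ by concentration. Your route buys a uniform, non-probabilistic separation step and an explicit quantitative dependence of $M$ on $d,r,k,l,\eps$; the paper's route avoids both the Borel chromatic number theorem and any concentration inequality, needing only weak-law estimates. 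Both uses of atom-freeness are legitimate (Cantor identification there, small color classes here).

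One step needs repair. You cannot in general refine a Borel proper $2r$-coloring into one whose color classes all have measure exactly $1/M$: splitting a class of measure $m_i$ into equal pieces of measure $1/M$ forces $m_iM$ to be an integer, and moving points between classes would threaten properness. This is harmless because exact balance is not needed. Atom-freeness lets you split each class into pieces of measure at most $\delta$; you can then drop the balancedness constraint on $\beta$ altogether, take $\beta(1),\dots,\beta(M)$ i.i.d.\ uniform on $[k]$ (which makes the expectation computation exact, since the $f$-labels on each $r$-ball are distinct), and apply the ordinary independent-coordinates bounded-differences inequality: changing $\beta(a)$ alters each coordinate of $P_{\GG,r}[c\times h]$ by at most $(d+1)^r\nu(f^{-1}(a))$, and the sum of squares of these differences is at most $(d+1)^{2r}\delta$. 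This removes both the equal-measure claim and the need for the permutation variant of McDiarmid. With that adjustment the proof is complete.
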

\begin{proof}
Let $C=\{0,1\}^\mathbb{N}$ be the Cantor set with the uniform
measure. Since $(X,\nu)$ has no mass points, there is a measurable
equivalence between $C$ and $X$, without loss of generality, we can
identify the two spaces, assume that $X=C$. Let
$\pi_i:C\rightarrow\{0,1\}^i$ be the projection onto the first $i$
coordinates. The map $\pi_i$ is measure preserving if we consider the
uniform measure on $\{0,1\}^i$. Fix $k,r\in\mathbb{N}$, and let
$g_i:\{0,1\}^i\rightarrow [k]$ be a uniform random coloring of
$\{0,1\}^i$ with $k$ colors. Our goal is to show that if $i$ is
sufficiently large, then with a large probability $g_i\circ\pi_i$ is
$(r,\eps)$-quasirandom.

\begin{claim}\label{CLAIM:QRAND}
For every $\eps_1>0$ and $n\in\mathbb{N}$, there is an index $j$ such
that if $x_1,\dots,x_n\in X$ are independent $\nu$-random points,
then with probability $1-\eps_1$, the map $\pi_j$ separates all the
points in $\cup_{i=1}^n N_{\GG,r}(x_i)$.
\end{claim}

It is easy to see that $\pi=(\pi_1,\pi_2,\dots)$ separates the points
of $\cup_{i=1}^nN_{\GG,r}(x_i)$ with probability {$1$} on $X^n$. Let
$Y_j$ denote the set of points $(x_1,x_2,\dots,x_n)$ in $X^n$ for
which $\pi_j$ separates the points in $\cup_{i=1}^n N_{\GG,r}(x_i)$.
{Then} $Y_j$ is an increasing chain of measurable sets such that
$\nu(\cup_{i=1}^\infty Y_i)=1$. This shows that for some index $j$,
we have $\nu(Y_j)>1-\eps_1$ and {completes} the proof of
Claim~\ref{CLAIM:QRAND}.

\smallskip

Let $x=(x_1,\dots,x_n)\in X^n$ and let $g$ be a $k$-coloring
$\cup_{i=1}^n N_{\GG,r}(x_i)$. Let us say that $x$ is \emph{representative} if
the
distribution of the $l$-colored neighborhood $N_{\GG,h,r}(x_t)$ for a
random $t\in[n]$ is $\eps/6$-close to the distribution $\mu_{r,h} :=
P_{\GG,r}[h]$.
Let us say that $(x,g)$ is \emph{representative} if the distribution
of the $kl$-colored neighborhood $(N_{\GG,h,r}(x_t),g)$ is
$\eps/3$-close to the distribution $\mu_{r,h,k}$.

Let $x=(x_1,\dots,x_n)\in X^n$ be chosen randomly and independently
from the distribution $\nu$. We note that with probability $1$, the
neighborhoods $N_{\GG,r}(x_i)$ are disjoint. If $n$ is large enough,
then (just by the Law of Large Numbers)
\[
\Pr_x(x~\text{representative})\ge 1-\frac\eps6.
\]
Hence if $g$ is a uniform random $k$-coloring of $\cup_{i=1}^n
N_{\GG,r}(x_i)$, and $n$ is large enough, then (by the Law of Large
Numbers again), we have
\[
\Pr_{x,g}((x,g)~\text{representative})\ge 1-\frac\eps3.
\]
Let us fix $n$ so that this holds.

Next, using Claim~\ref{CLAIM:QRAND}, we fix $j$ so that (for a random $x$)
$\pi_j$
separates all the points in $\cup_{i=1}^n N_{\GG,r}(x_i)$ with
probability at least $1-\eps/3$. Whenever this happens, the
restriction of $g_j\circ \pi_j$ to $\cup_{i=1}^n N_{\GG,r}(x_i)$ is a
uniform random $k$-coloring. In  other words, we can generate a
uniform random $k$-coloring of $\cup_{i=1}^n N_{\GG,r}(x_i)$ by
restricting $g_j\circ \pi_j$ to it if $\pi_j$ separates it, and
randomly $k$-coloring it otherwise. Thus
\[
\Pr_{x,g_j}((x,g_j\circ \pi_j)~\text{representative}) \ge
\Pr_{x,g}((x,g)~\text{representative})-\frac\eps3\ge 1-\frac{2\eps}3.
\]
It follows that there is at least one $k$-coloring $g_j$ for which
\[
\Pr_x((x,g_j\circ \pi_j)~\text{representative}) \ge 1-\frac{2\eps}3.
\]
Let us fix such a $g_j$. Then $c=g_j\circ \pi_j$ is an
$(r,\eps)$-quasirandom $k$-coloring of $X$. In fact, we can generate
a random point of $x$ by first generating $n$ independent random
points $x_1,\dots,x_n$ and choosing one of them, $x_t$, uniformly at
random. Then with probability at least $1-2\eps/3$, $(x,g_j \circ \pi_j
)$ is representative, and whenever this happens,  the
distribution of the $kl$-colored neighborhood
$(N_{\GG,h,r}(x_t),g_j\circ \pi_j)$ is $\eps/3$-close to the
distribution $\mu_{r,h,k}$. It follows that the total variation
distance of $(N_{\GG,h,r}(x_t),g_j\circ \pi_j)$ from $\mu_{r,h,k}$,
when $x_t$ is also randomly chosen, is at most $\eps$.
\end{proof}

Our next lemma shows that we can approximate any measurable
$k$-coloring of $\GG^+$ by a $k$-coloring that is {``locally
computable'' in the sense that the color of a node depends only on a
colored neighborhood of the node, and it depends only on a discrete
approximation of the nodeweights.} To be precise, we define the
\emph{$(m,s)$-discretization} ($m,s\in\Nbb$) as the map
$\xi_{m,s}:~X^+\to U^{s,m}$, where $\xi_{m,s}(x)$ is obtained by
considering the neighborhood $N_{\GG^+,s}(x)$, and replacing every
nodeweight $\xi(v)$ by $\lceil m \xi(v)\rceil$. Recall that the local
isomorphism $\phi:~V(\GG^+)\to V(\GG)$ is defined by $\phi:(x,\xi)
\mapsto x$.

\begin{lemma}\label{LEM:COLOR-APPROX}
For every $r\ge1$ and $\eps>0$, and every measurable $k$-coloring $c$
of $\GG^+$, there are positive integers $s,m$ and $l$, a measurable
$l$-coloring $h$ of $\GG$, and a map $f:~U^{s,m}\times[l]\to [k]$
such that the $k$-coloring $c'(x)= f\bigl(\xi_{s,m}(x),
h(\phi(x))\big)$ of $\GG^+$ satisfies
\[
d_{\rm var}\bigl(P_{\GG^+,r}[c],P_{\GG^+,r}[c']\bigr) \le \eps.
\]
\end{lemma}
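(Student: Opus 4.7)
The plan is to approximate the given measurable $k$-coloring $c$ by a coloring of $\GG^+$ whose value at $(x,\xi)$ depends only on a bounded discretized snapshot $\xi_{s,m}(x,\xi)$ of the Bernoulli weights around the root, together with a Borel coloring of the underlying graphing $\GG$, and then to compress the latter into a single finite $l$-coloring $h$. The argument breaks into a measure-theoretic reduction, a density step, and a finite-encoding step.

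First I would pass from total variation of colored $r$-neighborhood distributions on $\GG^+$ to the $\nu^+$-measure of the pointwise disagreement set of $c$ and $c'$. Iterating the graphing axiom \eqref{eq:degreeCond} for $\GG^+$ one shows, by induction on $r$, that for every Borel $D\subseteq X^+$,
\[
\int_{X^+} |N_{\GG^+,r}(x)\cap D|\,d\nu^+(x)\;\le\;C_r\,\nu^+(D),
\]
with $C_r:=1+d+d^2+\cdots+d^r$. Since $c$ and $c'$ produce identical $r$-neighborhood colorings whenever $N_{\GG^+,r}(x)$ avoids $\{c\ne c'\}$, a union bound yields $d_{\rm var}(P_{\GG^+,r}[c],P_{\GG^+,r}[c'])\le C_r\,\nu^+(\{c\ne c'\})$, so it suffices to produce $c'$ of the required form with $\nu^+(\{c\ne c'\})\le \eps/C_r$.

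Second I would approximate $c$ by an $\mathcal{F}_{s,m}$-measurable function, where $\mathcal{F}_{s,m}:=\sigma(\phi,\xi_{s,m})$. The family $\{\mathcal{F}_{s,m}\}_{s,m}$ is increasing in both parameters and, modulo $\nu^+$-null sets, generates the full Borel $\sigma$-algebra of $X^+$: the Bernoulli weights $\nu^+$-almost surely render every component $\GG_x$ rigid (the same observation used in the proof of Theorem~\ref{thm:graphing}), so a canonical BFS-enumeration of $V(\GG_x)$ is Borel and the joint data $\bigl(\phi,(\xi(v))_{v\in V(\GG_x)}\bigr)$ determines $(x,\xi)$; the discretizations $\xi_{s,m}$ recover $(\xi(v))_v$ in the joint limit $s,m\to\infty$. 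By the Doob martingale convergence theorem applied to the indicators of $c^{-1}(1),\ldots,c^{-1}(k)$, for $s,m$ large enough there is an $\mathcal{F}_{s,m}$-measurable $\bar c:X^+\to[k]$ with $\nu^+(\{c\ne\bar c\})\le\eps/C_r$, and the Doob--Dynkin factorization theorem supplies a Borel $G:U^{s,m}\times X\to[k]$ with $\bar c(x,\xi)=G(\xi_{s,m}(x,\xi),x)$.

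Finally, since $U^{s,m}$ is finite, I would encode the entire tuple $(G(y,\cdot))_{y\in U^{s,m}}\in[k]^{U^{s,m}}$ by a single Borel $l$-coloring of $\GG$ with $l:=k^{|U^{s,m}|}$: fix a bijection $\beta:[k]^{U^{s,m}}\to[l]$, set $h(x):=\beta\bigl((G(y,x))_{y\in U^{s,m}}\bigr)$, and define $f(y,t):=(\beta^{-1}(t))_y$. Then $h$ is Borel, $f:U^{s,m}\times[l]\to[k]$, and $c'(x,\xi):=f\bigl(\xi_{s,m}(x,\xi),h(\phi(x,\xi))\bigr)$ agrees pointwise with $\bar c$, which together with the first step gives the desired total variation bound. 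The main technical obstacle is the density claim in the second step --- that $\bigcup_{s,m}\mathcal{F}_{s,m}$ generates the full Borel $\sigma$-algebra of $X^+$ modulo $\nu^+$-null sets --- which ultimately rests on a careful setup of the Polish/Borel structure on $\Gf[0,1]$ and on the $\nu^+$-a.s.\ rigidity of Bernoulli-labeled components.
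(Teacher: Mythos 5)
Your proof is correct and follows essentially the same route as the paper's: both rest on the claim that the sets $\xi_{s,m}^{-1}(y)\cap\phi^{-1}(B)$ generate the Borel $\sigma$-algebra of $X^+$ (mod null sets), approximate $c$ in measure by a function of $(\xi_{s,m},\phi)$, encode the $\phi$-dependence by a finite Borel coloring $h$, and convert pointwise disagreement of measure $\delta$ into a total variation bound of order $d^{r+1}\delta$ via the mass-transport consequence of \eqref{eq:degreeCond}. The only differences are cosmetic (martingale convergence and Doob--Dynkin in place of the Monotone Class Theorem, and an $l=k^{|U^{s,m}|}$ product encoding in place of the atoms of finitely many Borel sets); just note that $\sigma(\xi_{s,m})$ is increasing in $m$ only along a divisibility chain such as $m=n!$, which is what one should use for the filtration.
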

\begin{proof}
Let $(X^+,\nu^+)$ be the underlying space of $\GG^+$. Let $\KK$
denote the set of all subsets of $X^+$ of the form
$\xi_{m,s}^{-1}(y)\cap \phi^{-1}(B)$, where $y\in U^{s,m}$, and $B$
is a Borel set of $X$. These sets generate the Borel sets of $X^+$,
hence by the Monotone Class Theorem, the closure under pointwise
convergence of the vector space generated by their indicator
functions contains every bounded Borel function on $X^+$.

In particular, there are pairs of integers $(m_i,s_i)$, colored balls
$y_i\in U^{s_i,m_i}$, Borel sets $B_i\subseteq X$ and real
coefficients $a_i$ $(i=1,\dots,N)$ such that
\[
\nu^+\Bigl\{x\in X^+:~\Bigl|c(x)-\sum_{i=1}^N
a_i\one(\xi_{m_i,s_i}(x)=y_i, \phi(x)\in B_i)\Bigr|\ge\frac12\Bigr\}
< \frac{\eps}{d^{r+1}}.
\]
Let $s=\max_i s_i$, $m=\prod_i m_i$, $l=2^N$, and let $h$ be a Borel
$l$-coloring of $X$ in which every $B_i$ is a union of color
classes. Then the sum in the above expression can be written as
$g\bigl(\xi_{s,m}(x),h(\phi(x))\bigr)$ for some $g:~U^{s,m}\times[l]\to
\mathbb{R}$. Rounding the values of $g$ to
the closest integer in $[k]$, we get a $k$-coloring $c'$ for which
\[
\nu\bigl\{x\in X^+:~c(x)\not=c'(x)\bigr\} <\frac{\eps}{d^{r+1}}.
\]
For a random point $x\in X^+$, the probability that the colorings $c$
and $c'$ differ on any node in its $r$-neighborhood is less than
$\eps$. This implies the lemma.
\end{proof}

Now we are able to prove the main theorem in this section.

\medskip

\noindent{\bf Proof of Theorem \ref{THM:BERLIFT}.} Our goal is to
approximate every element in $Q_{\GG^+,r,k}$ by an element in
$Q_{\GG,r,k}$ with arbitrary precision $\eps>0$. In other words, we
want to construct, for every measurable $k$-coloring $c$ of $\GG^+$,
a measurable $k$-coloring $c_0$ of $\GG$ that defines a similar
distribution of colored neighborhoods.

By Lemma~\ref{LEM:COLOR-APPROX} we may assume that $c$ is of the form
$f\bigl(\xi_{s,m}(x),h(\phi(x))\big)$ where $h$ is an $l$-coloring
of $\GG$ and $f:~U^{s,m}\to[k]$. Let $q$ be an $(s,\eps)$-quasirandom
$m$-coloring of $(\GG,h)$ guaranteed by Lemma~\ref{quasirand}, and
let $\GG'=(\GG,h \times q)$. Consider the $k$-coloring of $\GG$
defined by $c_0(z)=f(N_{\GG',s}(z),h(z))$. We claim that $c_0$ has
similar statistics as $c$:
\[
d_{\rm var}(P_{\GG^+,r}[c],P_{\GG,r}[c_0])\le\eps.
\]
This follows if we prove that the distributions of
$(\xi_{s,m}(y),h(\phi(y))$ (where $y$ is a random point of $\GG^+$)
and $(N_{\GG',s}(x),h(x))$ (where $x$ is a random point of $\GG$) are
close. But the distribution of $(\xi_{s,m}(y),h(\phi(y))$ is just
$\mu_{s,h,m}$, and the distribution of $(N_{\GG',s}(x),h(x))$ is
$\eps$-close to this by the quasirandomness of $q$. This completes
the proof.\proofend

\medskip

The following fact shows another connection between a graphing and
its associated Bernoulli graphing. We say that two graphings are
\emph{bi-locally isomorphic} if there exists a third graphing that
has local isomorphisms into both. The construction of the Bernoulli
lift implies that every graphing is bi-locally isomorphic to its
Bernoulli graphing. Since by the definition of the Bernoulli
graphing, two graphings are locally equivalent if and only if they
have the same Bernoulli graphing, we get the following more explicit
characterization:

\begin{proposition}\label{PROP:LOC-BILOC}
Two graphings are locally equivalent if and only if they are
bi-locally isomorphic.
\end{proposition}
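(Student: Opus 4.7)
For the easy direction ($\Leftarrow$), if $\HH$ witnesses bi-local isomorphism via local isomorphisms $\HH\to\GG_1$ and $\HH\to\GG_2$, then, as noted in Section~\ref{graphing}, local isomorphisms induce local equivalence, so $\HH$ is locally equivalent to each of $\GG_1$ and $\GG_2$; transitivity of local equivalence (both sides reduce to a common involution-invariant measure on $\Gf$) then gives that $\GG_1$ and $\GG_2$ are locally equivalent.

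For the non-trivial direction ($\Rightarrow$), assume $\GG_1$ and $\GG_2$ are locally equivalent, so they induce a common involution-invariant measure $\mu$ on $\Gf$ and hence share a Bernoulli graphing $\BB_{\GG_1}=\BB_{\GG_2}=:\BB_\mu$. By the discussion in Section~\ref{SEC:BERN}, each Bernoulli lift $\GG_i^+$ admits local isomorphisms $\phi_i\colon \GG_i^+\to \GG_i$ and $\psi_i\colon \GG_i^+\to \BB_\mu$. The plan is to construct a common cover $\HH$ of $\GG_1^+$ and $\GG_2^+$ over $\BB_\mu$ as a fiber product and then compose with the $\phi_i$ to obtain local isomorphisms $\HH\to\GG_i$.

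Concretely, set $\HH:=\GG_1^+\times_{\BB_\mu}\GG_2^+$ with vertex set $\{(u_1,u_2):\psi_1(u_1)=\psi_2(u_2)\}$, edges $(u_1,u_2)\sim(u_1',u_2')$ iff $u_1\sim u_1'$ in $\GG_1^+$ and $u_2\sim u_2'$ in $\GG_2^+$, and measure $\nu_\HH:=\int_{V(\BB_\mu)}\rho_1^y\otimes\rho_2^y\,d\nu_{\BB_\mu}(y)$, where $\rho_i^y$ is the disintegration of the measure on $V(\GG_i^+)$ along $\psi_i$ (which exists since all underlying spaces are Polish and the $\psi_i$ are Borel and measure-preserving). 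Because each $\psi_i$ is a local isomorphism, every connected component $D_i$ of $\GG_i^+$ maps graph-isomorphically onto a component $C$ of $\BB_\mu$; consequently a connected component of $\HH$ lying over $C$ is uniquely determined by a choice of component $D_1\subset\GG_1^+$ and $D_2\subset\GG_2^+$ covering $C$, and on each such component both coordinate projections $\pi_i\colon(u_1,u_2)\mapsto u_i$ and the base projection $\pi_\BB\colon(u_1,u_2)\mapsto\psi_1(u_1)=\psi_2(u_2)$ restrict to graph isomorphisms. From the fiber-product form of $\nu_\HH$, each $\pi_i$ is measure-preserving (its pushforward equals $\nu_{\GG_i^+}$ by the defining property of the disintegration), hence each $\pi_i$ is a local isomorphism; composing with $\phi_i$ yields the desired local isomorphisms $\HH\to\GG_i$.

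The main obstacle is to verify that $\HH$ is genuinely a graphing. Borel measurability of $E(\HH)$ is immediate since it is the intersection of the preimages of $E(\GG_1^+)$ and $E(\GG_2^+)$ under the coordinate projections. The measure-preservation axiom~\eqref{eq:degreeCond} for $\HH$ is the delicate part; it follows from the corresponding axiom for $\BB_\mu$ together with the observation that $\pi_\BB\colon\HH\to\BB_\mu$ is measure-preserving and restricts to a graph isomorphism on each connected component, via a standard disintegration-and-Fubini argument that pulls back the symmetry of the associated edge measure $\eta^*$ from $\BB_\mu$ to $\HH$.
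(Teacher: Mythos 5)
Your proof is correct and takes essentially the same route as the paper: the forward direction rests on the fact that both Bernoulli lifts $\GG_i^+$ map by local isomorphisms onto the common Bernoulli graphing $\BB_\mu$, and the common cover you build as the fiber product with the relatively independent joining of measures is precisely the ``join'' of $\GG_1^+$ and $\GG_2^+$ over $\BB_\mu$ constructed in Lemma~\ref{LEM:JOIN} (the paper packages this as the transitivity of bi-local isomorphism and defers the details to \cite{LL}). The component-isomorphism claim for the coordinate projections and the verification of \eqref{eq:degreeCond} via the joined edge measures match Claims~\ref{CLAIM:LOCFIN} and~\ref{CLAIM:MPRES}.
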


To prove this proposition, it suffices to show that bi-local
isomorphism is a transitive relation. This takes some work which we
do not discuss here; for the details, we refer {the} reader to
\cite{LL}.

\medskip

Let us turn to graph sequences. {Every locally convergent graph
sequence determines a unique involution-invariant distribution and
through this, a Bernoulli graphing. One expects that among sequences
with the same local limit, a sequence with the least possible global
structure would converge to the Bernoulli graphing in the
local-global sense. As a special case, the following conjecture was
popularized by us in the past few years: {\it Let $G_n$ be a random
$d$-regular graph on $n$ vertices (if $d$ is odd, then we only
consider even values of $n$). Then $(G_n)_{n=1}^\infty$ is a
Bernoulli sequence with probability one.} In other words, the limit
object is the Bernoulli graphing produced from the $d$-regular tree.
A very recent paper of Gamarnik and Sudan \cite{GaSu} disproves this
conjecture.

The following weaker conjecture remains unsolved:}

\begin{conjecture}
A growing sequence of random $d$-regular graphs is local-global
convergent with probability one.
\end{conjecture}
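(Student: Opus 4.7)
The plan is to establish the conjecture by combining concentration of the sets $Q_{G_n,r,k}$ with a uniqueness argument for their common limit. Fix $r,k\in\mathbb{N}$ and, for $\mu\in M(U^{r,k})$, set $\phi_\mu(G):=\inf_{c\in K(k,G)}\dvar(\mu,P_{G,r}[c])$, the distance from $\mu$ to $Q_{G,r,k}$. Since $M(U^{r,k})$ is a compact finite-dimensional metric space and each $\phi_\mu(\cdot)$ is $1$-Lipschitz in $\mu$, Hausdorff convergence of $(Q_{G_n,r,k})$ is equivalent to pointwise convergence of $\phi_\mu(G_n)$ along a fixed countable dense subset of $M(U^{r,k})$. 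The conjecture therefore reduces to (a) concentration of each $\phi_\mu(G_n)$ around its expectation, and (b) convergence of those expectations as $n\to\infty$.

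For (a) I would use the configuration model (conditioned on simplicity) or equivalently the switching chain on $d$-regular graphs. A single switching that replaces edges $\{a,b\},\{c,d\}$ by $\{a,d\},\{b,c\}$ modifies the $r$-neighborhood of only $O(d^r)$ vertices; so for any coloring $c$ transferred unchanged to the switched graph, $P_{G,r}[c]$ shifts by at most $O(d^r/n)$ in total variation, and taking the infimum over $c$ gives $|\phi_\mu(G)-\phi_\mu(G')|=O(d^r/n)$ under one switching. A McDiarmid--Azuma estimate then yields $|\phi_\mu(G_n)-\mathbb{E}[\phi_\mu(G_n)]|=O_{r,k,d}(n^{-1/2}\log n)$ with superpolynomially small failure probability, and a union bound over a countable dense family of $\mu$, combined with Borel--Cantelli, gives simultaneous concentration almost surely for all large $n$. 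For (b), extract a subsequence along which every $\mathbb{E}[\phi_\mu(G_{n_i})]$ converges; Theorem~\ref{thm:graphing} applied to almost every realization then produces an atom-free graphing $\GG$ representing the subsequential Hausdorff limit (atom-freeness follows from Remark~\ref{rem:continuity}).

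The principal obstacle is showing that the subsequential limit is unique, i.e.\ that $\overline{Q}_{\GG,r,k}$ does not depend on the chosen subsequence. The Gamarnik--Sudan result \cite{GaSu} cited above rules out the most appealing candidate, the Bernoulli graphing $\BB_\mu$ over the $d$-regular tree, so one cannot pin down the limit via factor-of-i.i.d.\ constructions alone. A natural attempt is to exploit exchangeability: viewing $G_n$ as coming from uniformly random permutations on $[n]$ and pulling any coloring back to a function of these permutations yields a law invariant under coordinate symmetries, which should constrain the asymptotic shape of $Q_{G_n,r,k}$. A more direct route is to couple $G_n$ and $G_m$ for $n\ll m$ and show that $(\mathbb{E}[\phi_\mu(G_n)])_n$ is Cauchy with summable error. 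The naive disjoint-union-plus-switchings coupling between $G_{2n}$ and $G_n\sqcup G_n'$ requires $\Theta(n)$ switchings to mix the halves and thus incurs $\Theta(1)$ error per doubling, which is insufficient; a more delicate coupling---perhaps within the weak-equivalence framework of Kechris \cite{Kech}, where the analogous uniqueness question for Bernoulli shifts on free groups is itself open---appears to be necessary. This uniqueness step is where any realistic attack on the conjecture will stand or fall.
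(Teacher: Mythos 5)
The statement you are addressing is not a theorem of the paper but an explicitly open conjecture: the authors state it immediately after noting that the stronger ``Bernoulli sequence'' version was disproved by Gamarnik and Sudan, and they offer no proof. So there is no argument in the paper to compare yours against, and the only question is whether your proposal actually closes the problem. It does not, and you say so yourself.

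Your part (a) is a sound and standard reduction. The function $\phi_\mu(G)=\inf_{c}\dvar(\mu,P_{G,r}[c])$ changes by $O(d^{r+1}/n)$ under a single switching (transfer the near-optimal coloring across the switch), so an Azuma/McDiarmid bound in the configuration model plus Borel--Cantelli gives that almost surely $\phi_\mu(G_n)-\Ex[\phi_\mu(G_n)]\to 0$ simultaneously for a countable dense family of $\mu$, and by the $1$-Lipschitz dependence on $\mu$ and compactness of $M(U^{r,k})$ this upgrades to uniform concentration. This correctly reduces the conjecture to the deterministic statement that $\Ex[\phi_\mu(G_n)]$ converges for each $\mu$. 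But that deterministic statement \emph{is} the conjecture: after concentration, local-global convergence of the random sequence is exactly the assertion that all subsequential Hausdorff limits of the (essentially deterministic) sets $Q_{G_n,r,k}$ coincide, i.e.\ that $\Ex[\phi_\mu(G_n)]$ is Cauchy. Your part (b) only extracts convergent subsequences (which is automatic by compactness of the Hausdorff metric, as the paper already notes), and your discussion of uniqueness consists of candidate strategies, each of which you observe to fail or to be unavailable: the Bernoulli graphing is excluded by Gamarnik--Sudan, and the union-plus-switchings coupling loses a constant per doubling. So the proposal is an honest reformulation of the conjecture together with a correct identification of its hard core, not a proof; the essential new idea --- a coupling or rigidity argument forcing $\overline{Q}_{\GG,r,k}$ to be the same for every subsequential limit graphing $\GG$ --- is missing, and as far as the paper (and the literature it cites) is concerned, it remains missing.
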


{We don't know whether for $d\ge3$, the Bernoulli graphing
corresponding to the $d$-regular tree is the local-global limit of
any graph sequence.}

\section{Joins and maximal graphings}\label{SEC:JOINS}

{We show that every weak equivalence class of graphings contains a
maximal member. For this, we introduce a direct product-like
construction.

\begin{lemma}\label{LEM:JOIN}
Let $\GG,\GG_1,\GG_2,\dots$ be graphings and let $\phi_i:~V(\GG_i)\to
V(\GG)$ be local isomorphisms. Then there exists a graphing $\HH$ and
local isomorphisms $\psi_i:~V(\HH)\to V(\GG_i)$ and $\xi:~V(\HH)\to
V(\GG)$ such that $\phi_i\circ\psi_i=\xi$.
\end{lemma}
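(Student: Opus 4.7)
The plan is to realize $\HH$ as the fiber product of the graphings $\GG_1,\GG_2,\dots$ over $\GG$. Take $V(\HH)\subseteq\prod_i V(\GG_i)$ to be the Borel set of sequences $(y_i)$ satisfying $\phi_i(y_i)=\phi_j(y_j)$ for all $i,j$, and define the projections $\psi_i((y_j))=y_i$ and $\xi((y_j))=\phi_1(y_1)$, so that $\phi_i\circ\psi_i=\xi$ holds tautologically. Since $\phi_i$ restricted to the connected component of $y_i$ in $\GG_i$ is a graph isomorphism onto the component of $x:=\xi((y_j))$ in $\GG$, every neighbor $x'$ of $x$ in $\GG$ lifts uniquely to a neighbor $y_i'$ of $y_i$ inside that component of $\GG_i$; I declare $(y_j)$ and $(y_j')$ adjacent in $\HH$. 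With this edge set, both $\xi$ and each $\psi_i$, restricted to any connected component of $\HH$, is a graph isomorphism onto a component of $\GG$ or of $\GG_i$.

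Next I equip $V(\HH)$ with a probability measure. Disintegrate each $\nu_{\GG_i}$ along the measure-preserving Borel map $\phi_i$ to obtain a measurable family $\{\rho_i^x\}_{x\in V(\GG)}$ of Borel probability measures with $\rho_i^x$ concentrated on $\phi_i^{-1}(x)$ and $\nu_{\GG_i}=\int\rho_i^x\,d\nu_\GG(x)$. Define
\[
\nu_\HH(A)=\int_{V(\GG)}\Bigl(\bigotimes_i\rho_i^x\Bigr)\bigl(A\cap\xi^{-1}(x)\bigr)\,d\nu_\GG(x),
\]
where the countable tensor product on $\xi^{-1}(x)=\prod_i\phi_i^{-1}(x)$ is a well-defined Borel probability measure because each factor is. A direct Fubini computation then yields $\nu_\HH(\psi_i^{-1}(B))=\nu_{\GG_i}(B)$ for every Borel $B\subseteq V(\GG_i)$ and $\nu_\HH(\xi^{-1}(A))=\nu_\GG(A)$ for every Borel $A\subseteq V(\GG)$, so each projection is measure-preserving; combined with the first paragraph, each is a local isomorphism.

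It remains to verify that $(V(\HH),\nu_\HH,E(\HH))$ satisfies the graphing axiom~\eqref{eq:degreeCond}. By the characterization recalled after Definition~\ref{defgraphing}, it suffices to exhibit a symmetric Borel measure $\eta_\HH^*$ on $V(\HH)\times V(\HH)$, concentrated on $E(\HH)$, with $\eta_\HH^*(A\times B)=\int_A e_\HH(\cdot,B)\,d\nu_\HH$. For an edge $(x,x')\in E(\GG)$, write $T_{x,x'}^{(i)}\colon\phi_i^{-1}(x)\to\phi_i^{-1}(x')$ for the component-lift bijection in $\GG_i$, and $T_{x,x'}((y_j))=(T_{x,x'}^{(i)}(y_i))_i$ for the induced map $\xi^{-1}(x)\to\xi^{-1}(x')$. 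Define $\eta_\HH^*$ as the law of $\bigl((y_j),T_{x,x'}((y_j))\bigr)$ when $(x,x')$ is sampled from $\eta_\GG^*$ and, conditionally on $x$, $(y_j)$ is sampled from $\bigotimes_i\rho_i^x$. A Fubini unwinding, using the identity $\eta_\GG^*(A'\times B')=\int_{A'}e_\GG(\cdot,B')\,d\nu_\GG$ and the disintegration of $\nu_\HH$, gives $\eta_\HH^*(A\times B)=\int_A e_\HH(\cdot,B)\,d\nu_\HH$.

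The main technical obstacle is the symmetry of $\eta_\HH^*$. Using the symmetry of $\eta_\GG^*$, this reduces to showing that for $\eta_\GG^*$-a.e.\ $(x,x')$, the bijection $T_{x,x'}^{(i)}$ transports $\rho_i^x$ to $\rho_i^{x'}$ for every $i$. This compatibility is what the graphing axiom for $\GG_i$ supplies: applying~\eqref{eq:degreeCond} in $\GG_i$ to $A=\phi_i^{-1}(U)$ and $B=\phi_i^{-1}(V)$ for shrinking Borel neighborhoods $U\ni x$, $V\ni x'$ and differentiating identifies $(T_{x,x'}^{(i)})_*\rho_i^x$ with $\rho_i^{x'}$ almost surely. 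A monotone-class argument then extends all the needed identities from product rectangles to arbitrary Borel subsets of $V(\HH)\times V(\HH)$, completing the construction of the fiber-product graphing $\HH$.
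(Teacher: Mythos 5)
Your construction is exactly the paper's: the fiber product over $\GG$ (the paper's ``diagonal'' $\Delta$ in $\prod_i X_i$ with the induced product edge set), equipped with the relatively independent joining of the $\nu_{\GG_i}$ over $\nu_\GG$ (your disintegration formula is just the fiberwise description of the paper's Radon--Nikodym definition \eqref{EQ:MEASURE}), and with the graphing axiom verified by joining the edge measures $\eta_i^*$ over $\eta_\GG^*$. The one step you spend extra care on --- that the edge holonomies $T^{(i)}_{x,x'}$ transport the fiber measures, which underlies the symmetry of $\eta^*_\HH$ --- is exactly what the paper dismisses with ``it is easy to check,'' so the two arguments match in both substance and level of detail.
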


We call $\HH$ a {\it join} of the graphings $\GG_i$ relative to the
common ``factor'' $\GG$.

\medskip

\begin{proof}
Let $(X,\nu)$ be the underlying space of $\GG$, and let $(X_i,\nu_i)$
be the underlying space of $\GG_i$. First, consider the cartesian
product space $U=\prod_i X_i$. Let $\psi_i:~U\to X_i$ be the
coordinate maps, and consider the ``diagonal'' $\Delta=\{x\in
U:~\psi_i(\phi_i(x))=\psi_j(\phi_j(x))\text{for all }i,j\in\Nbb\}$.
By this definition, the map $\xi= \psi_i\circ\phi_i|_U$ is
independent of $i$.

We note that $\Delta$ is nonempty; in fact, $\xi(\Delta)$ has measure
$1$ in $\GG$. Indeed, the facts that $\phi_i$ is measure preserving
and the space $X_i$ is standard imply that $\phi_i(X_i)$ is a
measurable subset of $X$ of measure $1$. Hence so is the set
$W=\cap_i\phi_i(X_i)$. For any $x\in W$ and any choice
$y_i\in\phi_i^{-1}(x)$, we have $y=(y_i,y_2,\dots)\in U$ and
$\xi(y)=x$. The cartesian product graph $\HH'=\prod_i \GG_i$, defined
by
\[
V(\HH')=\prod_i V(\GG_i) \qquad
E(\HH')=\{(x,y):~(\psi_i(x),\psi_i(y))\in E(\GG_i)
~\text{for all}~i=1,2,\dots\},
\]
is not locally finite in general, but the induced subgraph
$\HH=\HH'[\Delta]$ is:

\begin{claim}\label{CLAIM:LOCFIN}
When restricted to any connected component of $\HH$, every coordinate
map $\psi_i$ gives an isomorphism between this connected component of
$\HH$ and a connected component of $\GG_i$. Consequently, all degrees
of $\HH$ are bounded by $d$.
\end{claim}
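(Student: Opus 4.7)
The plan is to analyze the map $\psi_i$ on a connected component $C$ of $\HH$ in two steps (injectivity and surjectivity onto a component of $\GG_i$) and then read off the degree bound. The first observation to establish is that within a fixed component $C$, for every coordinate $j$, the projections $\{x_j:x\in C\}$ lie in a single connected component $D_j$ of $\GG_j$. This is immediate from the definition of $E(\HH)$: an edge in $\HH$ projects via $\psi_j$ to an edge in $\GG_j$, so a walk in $\HH$ projects to a walk in $\GG_j$.

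For injectivity of $\psi_i|_C$, suppose $x,y\in C$ with $x_i=y_i$. Then $\xi(x)=\phi_i(x_i)=\phi_i(y_i)=\xi(y)$, so for every other index $j$ one has $\phi_j(x_j)=\phi_j(y_j)$, while by the preceding paragraph $x_j$ and $y_j$ lie in the same component $D_j$ of $\GG_j$. Because $\phi_j$ restricted to $D_j$ is an isomorphism onto a component of $\GG$, it is in particular injective on $D_j$, which forces $x_j=y_j$ for all $j$, hence $x=y$.

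The surjectivity step is where the work concentrates, and I expect this to be the main obstacle. Let $D_i$ be the component of $\GG_i$ containing $x_i$ for some chosen $x\in C$; we must show $\psi_i(C)=D_i$. Given any $v\in D_i$, choose a path $x_i=v_0,v_1,\dots,v_k=v$ in $\GG_i$ and lift it to a path $x=y^{(0)},\dots,y^{(k)}$ in $\HH$ with $\psi_i(y^{(t)})=v_t$. For the inductive step, the edge $(v_{t-1},v_t)\in E(\GG_i)$ maps under $\phi_i$ to an edge $(\xi(y^{(t-1)}),\phi_i(v_t))$ of $\GG$. For each coordinate $j$, because $\phi_j$ restricted to the component of $\GG_j$ containing $y^{(t-1)}_j$ is an isomorphism onto the component of $\GG$ containing $\xi(y^{(t-1)})$, this edge in $\GG$ lifts to a unique edge $(y^{(t-1)}_j,w_j)\in E(\GG_j)$ with $\phi_j(w_j)=\phi_i(v_t)$. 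Assembling $w=(w_1,w_2,\dots)$, the common image $\phi_j(w_j)=\phi_i(v_t)$ shows $w\in\Delta$, and by construction $(y^{(t-1)},w)$ is an edge of $\HH$ with $w_i=v_t$; set $y^{(t)}=w$.

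Finally, the degree bound follows directly. Every neighbor of a point $x\in\Delta$ in $\HH$ lies in the component $C$ of $x$, and $\psi_i|_C$ is injective and sends these neighbors into the neighborhood of $x_i$ in $\GG_i$; therefore $\deg_\HH(x)\le\deg_{\GG_i}(x_i)\le d$. The only delicate point in the whole argument is the simultaneous lifting of a single edge of $\GG_i$ to all coordinates $j$ while staying inside $\Delta$; this is precisely what the local-isomorphism property of each $\phi_j$ is designed to provide.
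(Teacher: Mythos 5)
Your proof is correct and rests on the same key fact as the paper's argument: each $\phi_j$ restricts to an isomorphism from a component of $\GG_j$ onto a component of $\GG$, so an edge of $\GG$ lifts uniquely into every coordinate, and these lifts automatically land back in $\Delta$. The paper packages this as a single map $\zeta=(\zeta_1,\zeta_2,\dots)$, with $\zeta_j$ the inverse of $\phi_j$ on the relevant component, embedding the $\GG$-component onto the $\HH$-component; your injectivity argument plus edge-by-edge path lifting is an unwound version of the same construction (and it also yields, via injectivity, the unstated point that $\psi_i$ reflects edges, so it is a genuine graph isomorphism).
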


Let $x\in\Delta$, and consider the connected component $L$ of $\HH$
containing $x$, the connected component $J$ of $\GG$ containing
$\xi(x)$, and the connected component $J_i$ of $\GG_i$ containing
$\psi_i(x)$. The map $\phi_i$ is a local isomorphism, and hence it
gives an isomorphism between $J_i$ and $J$. Let $\zeta_i:~V(J)\to
V(J_i)$ be the inverse of this map, and define $\zeta(y)=
(\zeta_1(y),\zeta_2(y),\dots)$ for $y\in V(J)$. It is straightforward
to check that $\zeta$ is an embedding of $J$ into $\HH$, and that
there are no further edges of $\HH$ incident with the nodes of
$\zeta(V(J))$. Hence $\zeta(J)=L$. This proves the Claim.

We define a Polish space $Y$ on $\Delta$ by restricting the product
space $\prod_i X_i$ to $\Delta$. It is not hard to check that $\HH$
is a Borel graph on $Y$.

Next, we define a measure on $Y$. Let $A_i\subseteq X_i$ be Borel
sets so that only a finite number of them are proper subsets. Let
$\sigma_i(B)=\nu_i(A_i\cap\phi_i^{-1}(B))$ for every Borel subset
$B\subseteq X$, and consider the Radon-Nikodym derivative
$f_i=d\sigma_i/d\nu$. Define
\begin{equation}\label{EQ:MEASURE}
\mu(A_1\times A_2\times\cdots) = \int_X f_1f_2\dots\,d\nu.
\end{equation}
It is not hard to check that $\mu$ extends from these boxes to a
probability measure on all Borel sets in $\Delta$ (in ergodic theory,
this construction is called the {\it relatively independent joining}
of the measures $\nu_i$ over the common factor $\nu$; see e.g.
\cite{BCL}, Lemma 6.2 for a detailed description of this construction
for two factors). It is easy to see that every coordinate map
$\psi_i$ is measure preserving as a map from $(Y,\mu)\to(X,\nu)$.

\begin{claim}\label{CLAIM:MPRES}
The measure $\mu$, as a measure on the Borel graph $\HH$, is
involution invariant.
\end{claim}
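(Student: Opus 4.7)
The plan is to verify the identity
\[
\int_A e_\HH(x,B)\,d\mu(x) = \int_B e_\HH(x,A)\,d\mu(x)
\]
for all Borel $A,B\subseteq \Delta$. Both sides are finite Borel measures on $\Delta\times\Delta$ (of total mass bounded by $d$), so by a monotone class argument it suffices to check the identity on boxes of the form $A=(\prod_i A_i)\cap\Delta$ and $B=(\prod_i B_i)\cap\Delta$, where $A_i,B_i\subseteq X_i$ are Borel and all but finitely many equal $X_i$.

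For such boxes, I disintegrate each $\nu_i=\int_X \rho_i^z\,d\nu(z)$ over $\nu$ via the measure preserving map $\phi_i$, with $\rho_i^z$ supported on $\phi_i^{-1}(z)$. The relatively independent joining formula \eqref{EQ:MEASURE} is precisely the statement that $\mu=\int_X \bigl(\bigotimes_i \rho_i^z\bigr)\,d\nu(z)$ on $\Delta$. By Claim~\ref{CLAIM:LOCFIN}, for every $x\in\Delta$ the $\HH$-neighbors of $x$ correspond bijectively via $y\mapsto\xi(y)$ to the $\GG$-neighbors of $\xi(x)$: given $w\sim_\GG\xi(x)$, the unique corresponding $\HH$-neighbor $y$ of $x$ has $\psi_i(y)$ equal to the unique $\GG_i$-neighbor of $\psi_i(x)$ in $\phi_i^{-1}(w)$. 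Plugging this description into $e_\HH(x,B)$ and applying Fubini (only finitely many coordinates are nontrivial) yields
\[
\int_A e_\HH(x,B)\,d\mu(x) \;=\; \int_X\sum_{w\sim_\GG z}\prod_i \pi_i^{z,w}(A_i,B_i)\,d\nu(z),
\]
where $\pi_i^{z,w}(A_i,B_i)$ denotes the $\rho_i^z$-measure of those $x_i\in A_i$ whose unique $\GG_i$-neighbor in $\phi_i^{-1}(w)$ lies in $B_i$.

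The final step uses the graphing axiom of each $\GG_i$: the edge measure $\eta^*_{\GG_i}$ is symmetric under swapping coordinates, and disintegrating it along the map $(x_i,y_i)\mapsto(\phi_i(x_i),\phi_i(y_i))$ translates this into the coordinate-wise identity $\pi_i^{z,w}(A_i,B_i)=\pi_i^{w,z}(B_i,A_i)$ for $\eta^*_\GG$-a.e.\ $(z,w)$. Substituting and then exchanging the labels $z\leftrightarrow w$---which leaves the measure $d\eta^*_\GG(z,w)=\sum_{w\sim z}\delta_{(z,w)}\,d\nu(z)$ invariant because $\GG$ is itself a graphing---matches the right-hand sides of the two integrals, proving the involution invariance of $\mu$ on $\HH$. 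The main technical hurdle is justifying both the fiber-product form of $\mu$ (which is exactly the content of the relatively independent joining referenced from \cite{BCL}) and the coordinate-wise graphing identity for the conditional kernels $\pi_i^{z,w}$; once these are in place, the overall symmetry is assembled by taking products over $i$ and integrating against $\eta^*_\GG$.
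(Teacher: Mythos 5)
Your proof is correct and follows essentially the same route as the paper: the paper defines the edge measure $\sigma^*$ of $\HH$ as the relatively independent joining of the symmetric edge measures $\eta_i^*$ over $\eta^*$ and asserts that the required identity and symmetry are "easy to check," while your disintegration kernels $\pi_i^{z,w}$ are precisely the conditional measures of that joining and your computation is the verification the paper leaves implicit. The two key inputs are the same in both arguments: the component-wise bijection of neighbors from Claim~\ref{CLAIM:LOCFIN} and the symmetry of each $\eta_i^*$ and of $\eta^*$.
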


To prove this, it suffices to construct a measure $\sigma^*$ on
$Y\times Y$ that is concentrated on $E(\HH)$ and
\begin{equation}\label{EQ:STAR}
\sigma^*(A\times B)=\int_A e_\HH(x,B)\,d\mu(x).
\end{equation}
Since the $\GG_i$ are graphings, we know that there are measures
$\eta_i^*$ on the Borels sets of $X_i\times X_i$, and $\eta^*$ on the
Borels sets of $X\times X$, related similarly to the measures $\nu_i$
and $\nu$. The space $Y\times Y$ is the cartesian product of the
spaces $X_i\times X_i$, and the maps $\phi_i$ define measure
preserving maps $\phi_i\times\phi_i:~(X_i\times
X_i,\eta_i^*)\to(X,\eta^*)$. We define a measure $\sigma^*$ similarly
to \eqref{EQ:MEASURE} above. It is easy to check that $\sigma^*$
satisfies \eqref{EQ:STAR} and it is concentrated on $E(\HH)$.

Thus we know that $\HH$ is a graphing, and the maps
$\psi_i:~V(\HH)\to V(\GG_i)$ and $\xi$ are local automorphisms.
\end{proof}

\begin{theorem}\label{THM:LOCGLOB-MAX}
In every local equivalence class $\CC$ of graphings there is a
largest one in the local-global partial order.
\end{theorem}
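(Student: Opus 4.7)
The plan is to realise the maximum as a single ``join of everything in $\CC$'' via Lemma~\ref{LEM:JOIN}, using the common Bernoulli graphing of the class as the common factor. Write $\BB$ for the Bernoulli graphing attached to the involution-invariant measure shared by all members of $\CC$. First I would use Proposition~\ref{PROP:LOC-BILOC}: each $\GG\in\CC$ is locally equivalent, hence bi-locally isomorphic, to $\BB$, so there is an auxiliary graphing $\HH_\GG$ together with local isomorphisms $\HH_\GG\to\GG$ and $\HH_\GG\to\BB$. In particular $\GG\prec\HH_\GG$, and the whole family $\{\HH_\GG\}_{\GG\in\CC}$ sits over the common base $\BB$, so any subfamily is a legitimate input for Lemma~\ref{LEM:JOIN}.

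Next I would cut down to a countable subfamily by a diagonal argument. For each pair $(r,k)\in\Nbb^2$ the set $R_{r,k}=\bigcup_{\GG\in\CC}\overline{Q}_{\GG,r,k}$ is a subset of the compact metric space $(M(U^{r,k}),\dvar)$, which is separable, so $R_{r,k}$ has a countable dense subset; choosing (using AC) for each such point a graphing from $\CC$ whose closed quotient set contains a nearby point, and diagonalising over the countably many pairs $(r,k)$, I obtain a single sequence $\GG_1,\GG_2,\dots$ in $\CC$ such that $\bigcup_n\overline{Q}_{\GG_n,r,k}$ is dense in $R_{r,k}$ for every $r,k$.

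Now I apply Lemma~\ref{LEM:JOIN} to the countable family $\{\HH_{\GG_n}\}_{n\ge1}$ with common factor $\BB$ and local isomorphisms $\HH_{\GG_n}\to\BB$. This produces a graphing $\MM$ with local isomorphisms $\psi_n:\MM\to\HH_{\GG_n}$; composing with $\HH_{\GG_n}\to\GG_n$ yields local isomorphisms $\MM\to\GG_n$, so $\GG_n\prec\MM$ for every $n$, and $\MM$ is locally equivalent to each $\GG_n$ (and therefore to $\BB$ and to every member of $\CC$), so $\MM\in\CC$. For an arbitrary $\GG\in\CC$ every point of $\overline{Q}_{\GG,r,k}$ lies in $R_{r,k}$, hence in the closure of $\bigcup_n\overline{Q}_{\GG_n,r,k}$; since $\overline{Q}_{\MM,r,k}$ is closed and contains each $\overline{Q}_{\GG_n,r,k}$, it contains $\overline{Q}_{\GG,r,k}$. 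Thus $\GG\prec\MM$ for every $\GG\in\CC$, so $\MM$ is the sought maximum.

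The main obstacle I anticipate is the countable-reduction step: one must guarantee that a single countable subfamily witnesses Hausdorff-density in all the sets $R_{r,k}$ simultaneously, and that the $\HH_{\GG_n}$ supplied by Proposition~\ref{PROP:LOC-BILOC} can be chosen so that Lemma~\ref{LEM:JOIN} applies (in particular that the relatively independent joining over $\BB$ is well-defined for the chosen maps). Once these bookkeeping points are handled, the existence of the maximum reduces to one application of the join lemma plus the separability of $M(U^{r,k})$.
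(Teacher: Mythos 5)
Your proposal is correct and follows essentially the same route as the paper: pass to a countable subfamily whose quotient sets are jointly dense, lift each member over the common Bernoulli graphing (the paper uses the Bernoulli lifts $\GG_i^+$ directly, which are exactly the auxiliary graphings your appeal to Proposition~\ref{PROP:LOC-BILOC} produces), and apply Lemma~\ref{LEM:JOIN} with $\BB$ as the common factor. Your write-up additionally spells out the separability/diagonalization step and the final density argument, which the paper leaves implicit.
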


\begin{proof}
Let $Q_{r,k}$ denote the union of the sets $Q_{\GG,r,k}$, where
$\GG\in\CC$. There is a countable set of graphings
$\FF=\{\GG_1,\GG_2,\dots\}$ in the equivalence class such that
$\cup_i Q_{\GG_i,r,k}$ is dense in $Q_{r,k}$  for every $r$ and $k$.
It is enough to find a graphing that is larger than every Bernoulli
lift $\GG_i^+$ in the local-global partial order.

Let $\BB$ be the Bernoulli graphing in $\CC$. As shown in Section
\ref{SEC:BERN}, there are local isomorphisms $\phi_i:~V(\GG_i^+)\to
V(\BB)$. By Lemma \ref{LEM:JOIN}, there is a graphing $\HH$ and there
are local isomorphisms $\HH\to\GG_i^+$. This implies that $\HH$ is
above any of the $\GG_i^+$ in the local-global partial order.
\end{proof}
}

\section{Non-standard graphings}\label{NONSTAND}

{An alternative proof of Theorem \ref{thm:graphing} can be based on
the ultraproduct method of Elek and Szegedy \cite{ESz}.} Let
$(G_i)_{i=1}^\infty$ be an arbitrary graph sequence of maximum degree
at most $d$. Let $\omega$ be a non-principal ultrafilter on
$\mathbb{N}$. Let $\bG$ denote the ultraproduct of the graph
sequence. The vertex set $\bV$ of $\bG$ is the ultraproduct of the
vertex sets $V_i$ of $G_i$ and the edge set $\bE\subset \bV\times\bV$
is the {ultraproduct} of the edge sets $E_i\subset V_i\times V_i$ of
$G_i$. The graph $\bG$ has maximum degree at most $d$, since this
property is expressible by a first order formula. We can also
construct a $\sigma$-algebra $\AA$ on $\bV$ and a probability measure
$\mu$ on $\bV$ which is the ultralimit of the uniform distributions
on the sets $V_i$. It is not hard to check that $\bG$ satisfies the
graphing axiom \eqref{eq:degreeCond}.

If $(G_i)_{i=1}^\infty$ is a locally convergent graph sequence,
then $\bG$ has neighborhood frequencies that are the limits of the
neighborhood frequencies of the graphs $G_i$. If
$(G_i)_{i=1}^\infty$ is locally-globally convergent, then
$Q_{\bG,r,k}$ is the Hausdorff limit of the sets $Q_{G_i,r,k}$.

However, this does not directly prove Theorem \ref{thm:graphing},
since $(\bV,\mu)$ is not a separable probability space. One can
complete the proof by choosing an appropriate separable
sub-sigma-algebra of $\bG$ which preserves the graphing structure. We
omit the details here.

An attractive feature of ultralimit graphings is that the sets
$Q_{\bG,r,k}$ are all closed. It is not clear if there is a standard
graphing representation of the limit of a convergent sequence with
this stronger property.

\begin{question}
\label{ques:closedness}
 Let $(G_n)_{n=1}^\infty$ be a local-global convergent sequence of graphs. Is
there a graphing $\GG$ that represents the limit with the property
that $Q_{\GG,r,k}$ are all closed?
\end{question}

\section{Hyperfinite graphs and graphings}

For a graph $G$, we define $\tau_q(G)$ as the smallest $t$ such that
deleting $t$ appropriate nodes, every connected component of the
remaining graph has at most $q$ nodes. We say that a sequence
$(G_n)_{n=1}^\infty$ of finite graphs is
\emph{$(q,\eps)$-hyperfinite} if
$\liminf_n\tau_q(G_n)/|V(G_n)|\le\eps$. We say that
$(G_n)_{n=1}^\infty$ is \emph{hyperfinite} if for every $\eps>0$,
there is a $q$ such that $(G_n)_{n=1}^\infty$ is
$(q,\eps)$-hyperfinite. We can define hyperfiniteness of a graphing
$\GG$ on underlying space $X$ similarly: let $\tau_q(\GG)$ denote the
infimum of numbers $\delta \ge 0$ such that we can delete a Borel set
$S\subseteq X$ with measure $\delta$ so that every connected
component of the remaining graphing has at most $q$ nodes. We say
that a graphing $\GG$ is \emph{$(q,\eps)$-hyperfinite} if
$\tau_q(\GG)\le\eps$, and we say that $\GG$ is \emph{hyperfinite} if
for every $\eps>0$, there is a $q$ such that $\GG$ is
$(q,\eps)$-hyperfinite. Since we are talking about graphs with
bounded degree, we could replace deleting nodes by deleting edges in
the definitions of hyperfiniteness.

Hyperfiniteness in different settings was introduced by different
people (see Kechris and Miller \cite{KM}, Elek \cite{Elek1}, Schramm
\cite{Schramm}). Schramm proved that a  locally convergent sequence of
graphs is hyperfinite if and only if its limit is hyperfinite. This
does not hold for $(q,\eps)$-hyperfiniteness for a fixed pair $q$ and
$\eps$. As an easy example, a sequence of random $d$-regular graphs
tend to a limiting involution-invariant distribution (concentrated on
the infinite $d$-regular tree) that is $(1,1/2)$-hyperfinite, while the
sequence is not. On the other hand, a local-global convergent
sequence of graphs behaves nicer:

\begin{proposition}\label{PROP:HYPERFIN}
Let a sequence $(G_n)_{n=1}^\infty$ of finite graphs converge to a
graphing $\GG$ in the local-global sense. Then $(G_n)_{n=1}^\infty$
is $(q,\eps)$-hyperfinite if and only if $\GG$ is
$(q,\eps)$-hyperfinite.
\end{proposition}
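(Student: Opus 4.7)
The plan is to reformulate $(q,\eps)$-hyperfiniteness as a condition on the local statistics of a $2$-coloring (color $0$ = ``deleted'', color $1$ = ``kept''), then transfer the condition between the graphs $G_n$ and the graphing $\GG$ using the Hausdorff convergence $Q_{G_n,q,2}\to Q_{\GG,q,2}$. Call a rooted colored ball $(H,o,c)\in U^{q,2}$ \emph{bad} if $o$ has color $1$ and the color-$1$ connected component of $o$ inside $H$ either has more than $q$ vertices or contains a vertex at distance exactly $q$ from $o$, and let $\BB\subseteq U^{q,2}$ be the (finite) set of bad balls. Since any connected graph on at most $q$ vertices has diameter at most $q-1$, a color-$1$ vertex $v$ in any graph or graphing lies in a color-$1$ component of size $\le q$ if and only if its colored $q$-neighborhood lies outside $\BB$. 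Consequently, $\tau_q(G)/|V(G)|\le\eps$ is equivalent to the existence of a $2$-coloring $c$ with $P_{G,q}[c](\BB)=0$ and color-$0$ fraction $\le\eps$; for a graphing, $\tau_q(\GG)\le\eps$ is equivalent to the existence, for every $\delta>0$, of a Borel $c$ with $P_{\GG,q}[c](\BB)=0$ and color-$0$ measure $\le\eps+\delta$.

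For the forward direction, suppose $\liminf_n\tau_q(G_n)/|V(G_n)|\le\eps$ and fix $\delta>0$. Along a subsequence there are $2$-colorings $c_n$ of $G_n$ with $P_{G_n,q}[c_n](\BB)=0$ and color-$0$ fraction $\le\eps+\delta$. By compactness of $M(U^{q,2})$ we pass to a further subsequence so that $P_{G_n,q}[c_n]\to\mu^\ast$; because $U^{q,2}$ is finite this convergence is setwise, so $\mu^\ast(\BB)=0$ and $\mu^\ast$ has color-$0$ mass $\le\eps+\delta$. By local-global convergence $\mu^\ast\in\overline{Q_{\GG,q,2}}$, so we can approximate $\mu^\ast$ to within $\delta$ in $\dvar$ by some $P_{\GG,q}[c']\in Q_{\GG,q,2}$. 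Deleting from $\GG$ the color-$0$ vertices of $c'$ together with the color-$1$ vertices whose colored $q$-neighborhood is bad removes a Borel set of measure at most $\eps+3\delta$ and leaves every connected component of size $\le q$. This shows $\tau_q(\GG)\le\eps+3\delta$, and letting $\delta\to 0$ gives $\tau_q(\GG)\le\eps$.

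Conversely, suppose $\tau_q(\GG)\le\eps$. For each $\delta>0$ choose a Borel $S\subseteq V(\GG)$ of measure $\le\eps+\delta$ whose removal leaves only components of size $\le q$, and let $c$ be the $2$-coloring taking value $0$ on $S$ and $1$ elsewhere; then $P_{\GG,q}[c](\BB)=0$ and the color-$0$ measure is $\le\eps+\delta$. By Hausdorff convergence, for all sufficiently large $n$ there is a $2$-coloring $c_n$ of $G_n$ with $\dvar(P_{G_n,q}[c_n],P_{\GG,q}[c])\le\delta$; removing the color-$0$ vertices of $c_n$ together with the color-$1$ vertices of bad $q$-neighborhood deletes at most $(\eps+3\delta)|V(G_n)|$ vertices and leaves components of size $\le q$. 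Hence $\liminf_n\tau_q(G_n)/|V(G_n)|\le\eps+3\delta$, and $\delta\to 0$ completes the argument. The only mildly delicate point is that Hausdorff convergence identifies $\overline{Q_{\GG,q,2}}$ rather than $Q_{\GG,q,2}$ itself as the limit of the $Q_{G_n,q,2}$, but this approximation error is cleanly absorbed by the $\delta$ slack present both in the infimum defining $\tau_q(\GG)$ and in the $\liminf$ defining hyperfiniteness of a sequence.
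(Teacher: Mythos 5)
Your proof is correct and follows essentially the same route as the paper's: both encode $(q,\eps)$-hyperfiniteness as a condition on the colored $q$-neighborhood statistics of a $2$-coloring (forbidden ``bad'' balls plus a bound on the deleted-color mass) and transfer it between $G_n$ and $\GG$ via the Hausdorff convergence of the quotient sets. You merely make explicit the $\delta$-bookkeeping (the closure of $Q_{\GG,q,2}$, the $\liminf$ in the definition for sequences, and the infimum in $\tau_q(\GG)$) that the paper leaves implicit.
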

\begin{proof}
A finite graph $G$ satisfies $\tau_q(G)\le \eps|V(G)|$ if and only
if it has a $2$-coloring $c$ such that
$P_{G,k,r}[c](c(\text{root})=1)\le \eps$ and  $P_{G,k,r}[c](B)=0$ for
every colored $r$-ball $B$ that contains a connected all-blue
subgraph with $k+1$ nodes. A graphing $\GG$ satisfies $\tau_q(\GG)\le
\eps$ if and only if for every $\eps'>\eps$, it has a $2$-coloring $c$
such that $P_{\GG,k,r}[c](c(\text{root})=1)\le \eps'$ and
$P_{\GG,k,r}[c](B)=0$ for every colored $r$-ball $B$ that contains a
connected all-blue subgraph with $k+1$ nodes. The proposition follows
by the definition of local-global convergence to a graphing.
\end{proof}

The following important property of hyperfiniteness is closely
related to the results of Schramm \cite{Schramm} and Benjamini,
Schramm and Shapira \cite{BSS}. It can be derived using the graph
partitioning algorithm of Hassidim, Kelner, Nguyen and Onak
\cite{HKNO}; a direct proof is given in \cite{LL}.

\begin{proposition}\label{PROP:HF-WEAKISO}
Hyperfiniteness is invariant under local equivalence.
\end{proposition}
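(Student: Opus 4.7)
The strategy is to show that hyperfiniteness is detected by the closed quotient sets $\overline{Q}_{\GG,r,2}$, and to reduce hyperfiniteness of any graphing to that of its Bernoulli graphing, which depends only on the local-equivalence class. I would first reduce to the atom-free case: by Remark~\ref{rem:continuity}, atoms of a graphing correspond to finite connected components, which are trivially hyperfinite, and since locally equivalent graphings induce the same measure on $\Gf$ their atomic parts contribute identical statistics. So one may restrict attention to the atom-free parts of $\GG_1$ and $\GG_2$.

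Next I would extract from the proof of Proposition~\ref{PROP:HYPERFIN} the following $Q$-set characterization: $\tau_q(\GG)\le\eps$ holds if and only if, for every $\eps'>\eps$, the closed set $\overline{Q}_{\GG,r,2}$ (for $r$ depending on $q$) meets a fixed closed set $H_{q,\eps',r}\subseteq M(U^{r,2})$ defined by the linear conditions ``red-root probability at most~$\eps'$'' and ``zero mass on colored $r$-balls containing a connected all-blue subgraph on $q+1$ vertices''. Hyperfiniteness of $\GG$ thus depends only on the family $\{\overline{Q}_{\GG,r,2}\}_{r\in\Nbb}$.

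The proof then splits in two halves. The easy half uses Corollary~\ref{cor:bermin}: for every atom-free graphing $\GG$ one has $\BB_\GG\prec\GG$, hence $\overline{Q}_{\BB_\GG,r,2}\subseteq\overline{Q}_{\GG,r,2}$, so a hyperfiniteness witness in $\BB_\GG$ automatically transfers to $\GG$. The hard half is the converse: $\GG$ hyperfinite implies $\BB_\GG$ hyperfinite. For this I would invoke a local-algorithm realization of hyperfinite partitions: by Hassidim--Kelner--Nguyen--Onak~\cite{HKNO}, and the related work of Schramm~\cite{Schramm} and Benjamini--Schramm--Shapira~\cite{BSS}, hyperfinite partitions of bounded-degree graphs can be produced by a factor of i.i.d.\ process, i.e.\ a Borel function of the graph decorated with independent random reals at each vertex. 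Applied to a locally convergent sequence of finite graphs with the same Benjamini--Schramm limit as $\GG$ (hyperfinite by Schramm's theorem, since $\GG$ is), this yields $2$-colorings whose statistics lie in $H_{q,\eps',r}$, and the same factor-of-i.i.d.\ rule, realized directly on $\BB_\GG$ using its built-in i.i.d.\ vertex weights, produces a hyperfiniteness witness for $\BB_\GG$.

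The two halves combine as follows: if $\GG_1\sim\GG_2$ locally and $\GG_1$ is hyperfinite, the hard half shows that their common Bernoulli graphing $\BB=\BB_{\GG_1}=\BB_{\GG_2}$ is hyperfinite, and the easy half transfers this to $\GG_2$. The main obstacle is the hard half, which rests on the nontrivial fact that hyperfinite partitions admit local-algorithm realizations; this is essentially the content of~\cite{HKNO} and of the direct argument deferred to~\cite{LL}. The remaining ingredients---the $Q$-set characterization of hyperfiniteness and the Bernoulli minimality of Corollary~\ref{cor:bermin}---are already in hand.
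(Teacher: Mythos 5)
First, note that the paper itself gives no proof of this proposition: it only points to \cite{Schramm}, \cite{BSS} and \cite{HKNO} and defers a direct argument to \cite{LL}. Your overall architecture --- reduce to the common Bernoulli graphing, obtain the direction ``$\BB_\GG$ hyperfinite $\Rightarrow \GG$ hyperfinite'' from Corollary~\ref{cor:bermin}, and concentrate all the difficulty in ``$\GG$ hyperfinite $\Rightarrow \BB_\GG$ hyperfinite'' via a local-algorithm realization of hyperfinite partitions --- is the right one and matches what the cited proofs do. The $\overline{Q}_{\GG,r,2}$ reformulation of $(q,\eps)$-hyperfiniteness is sound (a limit point with small mass on bad balls is upgraded to an exact witness by recoloring the offending points), and the atom-free reduction is essentially fine, though it is cleaner to split off the union of finite components (always hyperfinite, and of locally determined measure) than the atomic part per se.

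The genuine gap is in the hard half. You propose to apply the Hassidim--Kelner--Nguyen--Onak algorithm to ``a locally convergent sequence of finite graphs with the same Benjamini--Schramm limit as $\GG$''. For a general graphing no such sequence is known to exist --- that is precisely the Aldous--Lyons conjecture, which the paper explicitly lists as open --- so as written your argument establishes the proposition only for graphings whose local statistics are realized by finite graphs, whereas the paper needs it (e.g.\ in Theorem~\ref{THM:HYP-BERN}) for arbitrary hyperfinite graphings. (You also invoke Schramm's theorem, which the paper derives as a corollary of this very proposition; that circularity is repairable since Schramm's result has an independent proof, but it should be flagged.) The repair is to run the partitioning argument directly on the graphing: starting from a Borel set $S$ with $\nu(S)\le\eps$ witnessing $(q,\eps)$-hyperfiniteness of $\GG$, one must construct a bounded-radius factor-of-i.i.d.\ rule on the Bernoulli lift $\GG^+$ whose output is a $(q',\eps')$-hyperfinite partition; such a rule then transfers verbatim to $\BB_\GG$ and hence, by Corollary~\ref{cor:bermin}, to any locally equivalent atom-free graphing. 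That conversion of an arbitrary Borel witness into a local, randomness-driven one is the real content of the proposition; it cannot be obtained merely by quoting the finite-graph statements of \cite{HKNO} and \cite{BSS}, and it is exactly the ``direct proof'' the paper defers to \cite{LL}.
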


Together with Proposition \ref{PROP:HYPERFIN}, this implies the above
mentioned result of Schramm that  a  locally convergent sequence of
graphs is hyperfinite if and only if its limit is hyperfinite. We
note that $(q,\eps)$-hyperfiniteness for a fixed $q$ and $\eps$ is
not invariant under local equivalence, which is shown, for example,
by the local-global limits of random $d$-regular graphs and of random
$d$-regular bipartite graphs. Our main result about hyperfinite
graphings is a strengthening of Corollary~\ref{cor:bermin}.

\begin{theorem}\label{THM:HYP-BERN}
Every atom-free hyperfinite graphing $\GG$ is locally-globally
equivalent to its Bernoulli graphing.
\end{theorem}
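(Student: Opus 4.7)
By Corollary \ref{cor:bermin} we already have $\BB_\GG\prec\GG$, so what remains is to prove $\GG\prec\BB_\GG$. Given a Borel $k$-coloring $c$ of $\GG$, an integer $r\ge 1$, and $\eps>0$, my goal is to construct a Borel $k$-coloring $c'$ of $\BB_\GG$ with $\dvar(P_{\GG,r}[c],P_{\BB_\GG,r}[c'])\le\eps$. The strategy is to use hyperfiniteness of $\GG$ (and, through Proposition \ref{PROP:HF-WEAKISO}, of $\BB_\GG$) to reduce to finite components, and then to use the built-in i.i.d.\ $[0,1]$-weights of $\BB_\GG$ as a source of randomness to equivariantly sample a coloring on each component that matches the conditional distribution of $c$ on the $\GG$-side.

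Concretely, I pick $q\ge r$ and $\delta\le\eps/(3(d+1)^{r+1})$; by hyperfiniteness of $\GG$, I choose a Borel set $S\subseteq V(\GG)$ with $\mu(S)\le\delta$ such that every component of $\GG\setminus S$ has at most $q$ vertices. Since $\mu(N_r(S))\le(d+1)^{r+1}\mu(S)\le\eps/3$, the distribution $P_{\GG,r}[c]$ agrees up to total variation $\eps/3$ with the distribution $\Pi$ on rooted colored finite graphs obtained by sampling a $\mu$-random $v$ and returning the component of $v$ in $\GG\setminus S$, rooted at $v$ and marked by $c$. The core technical step is to exhibit a \emph{matching decomposition} on $\BB_\GG$: a Borel set $S'\subseteq V(\BB_\GG)$ with $\mu_\BB(S')\le\delta$, components of $\BB_\GG\setminus S'$ of size at most $q$, and whose uncolored rooted-component distribution coincides with the uncolored marginal of $\Pi$.

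Granting this, I define $c'$ on each finite component $H$ of $\BB_\GG\setminus S'$ as follows. Because the i.i.d.\ labels $\xi:V(H)\to[0,1]$ are a.s.\ pairwise distinct, they induce a canonical total order on $V(H)$ which breaks every graph automorphism of $H$. Using this label-induced order as my randomness, I fix once and for all a measurable, $\mathrm{Aut}(H)$-equivariant map $F_H:[0,1]^{V(H)}\to[k]^{V(H)}$ whose pushforward of Lebesgue measure equals the $\GG$-side conditional distribution of the coloring $c$ given the component isomorphism type $H$, and set $c'(H,o,\xi)=F_H(\xi)(o)$. Equivariance makes $c'$ well-defined on the isomorphism classes comprising $V(\BB_\GG)$, and the resulting $r$-neighborhood distribution equals $\Pi$ up to an analogous $\eps/3$ error from $N_r(S')$; the triangle inequality then yields the required bound.

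The main obstacle is producing the matching decomposition $S'$, because an \emph{arbitrary} hyperfinite cut of $\BB_\GG$ need not induce the same distribution of rooted components as the cut chosen in $\GG$. My plan is to force the cut of $\GG$ itself to be a factor-of-i.i.d.\ procedure, by combining the local partitioning algorithm of \cite{HKNO} (used in the proof of Proposition \ref{PROP:HF-WEAKISO}) with a quasirandom coloring on $\GG$ supplied by Lemma \ref{quasirand} as the auxiliary randomness; the identical local rule executed on $\BB_\GG$ with its built-in i.i.d.\ weights then produces $S'$ with matching rooted-component distribution. Once the matching decomposition is established, the equivariant sampling on finite components is routine, since distinct i.i.d.\ labels on any finite graph completely break its automorphism group and reduce the coloring sampling to a measurable pushforward of Lebesgue measure.
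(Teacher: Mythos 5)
Your overall architecture --- reduce both graphings to components of size at most $q$ via hyperfiniteness, match the distribution of rooted components, then reproduce the conditional coloring distribution on the $\BB_\GG$ side using the a.s.\ distinct i.i.d.\ labels to break automorphisms --- is the right one, and your final sampling step is essentially the paper's device of the functions $f_C$ and the splitting of $V_H$ into pieces $V_{H,f}$. But the step you yourself flag as the main obstacle, producing a cut $S'$ of $\BB_\GG$ whose rooted-component distribution matches that of your cut $S$ of $\GG$, is only a plan, and the plan is both heavier than necessary and incomplete. What you would need from the HKNO machinery is that a hyperfinite graphing admits a $(q,\eps)$-hyperfinite cut that is a \emph{factor of i.i.d.} (a bounded-radius rule applied to i.i.d.\ labels); that is a genuine theorem not established in this paper --- Proposition \ref{PROP:HF-WEAKISO} only says that $\BB_\GG$ is hyperfinite, i.e.\ that \emph{some} Borel cut exists, with no control relating its component statistics to those of a cut of $\GG$. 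Even granting it, you face a conditioning problem: once the component structure of $\BB_\GG\setminus S'$ is determined by the labels $\xi$, the labels on a given component are no longer i.i.d.\ uniform, so the pushforward under your $F_H$ of the \emph{conditional} label distribution need not equal the pushforward of Lebesgue measure. (This last point is repairable by splitting each label into two independent uniforms, one driving the cut and one driving the sampling, but you do not do this.)

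The paper avoids all of this by transferring in the opposite direction. It cuts $\BB_\GG$ first (legitimate by Proposition \ref{PROP:HF-WEAKISO}), encodes the cut $S$ together with a discretization $\lceil m\,w(o)\rceil$ of the node weights as a single finite coloring $b$ of $\BB_\GG$, chosen so that outside a small exceptional set the nodes of each small component receive distinct colors, and then invokes the already-proven Corollary \ref{cor:bermin} ($\BB_\GG\prec\GG$) to realize $b$ approximately on $\GG$ as a coloring $b^*$ with matching statistics at radius $n$. The matching component statistics you are struggling to arrange then come for free (condition (b) in the paper's proof), and since $b^*$ separates the vertices within each component of $\GG\setminus T'$, the given coloring $c$ is literally a function $f_C$ of $b^*$ on each component; transporting these functions back along the matching, and recoloring via $f\circ b$, yields $c'$. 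The lesson is to use $\BB_\GG\prec\GG$ as the transfer mechanism rather than attempting to construct a factor-of-i.i.d.\ cut of $\GG$.
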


\begin{proof}
By Corollary~\ref{cor:bermin}, $\BB_\GG\prec\GG$. It remains to show
that $\GG\prec\BB_\GG$. In other words, for every coloring of $\GG$,
we have to find a coloring of $\BB_\GG$ with almost the same local
statistics.

Let $(X,\nu)$ be the underlying space of $\GG$, let $c:~X\rightarrow
[k]$ be a measurable coloring, and let us fix a radius
$r\in\mathbb{N}$ and an $\eps>0$. Let $\nu_B$ denote the measure of
$\BB_\GG$, and set $\eps_1=\eps/(8(d+1)^r)$. By Proposition
\ref{PROP:HF-WEAKISO}, the Bernoulli graphing $\BB_\GG$ of a
hyperfinite graphing $\GG$ is also hyperfinite.  Let
$S\subset\Gf[0,1]$ be a subset such that $\nu_B(S)\le\eps_1$ and
every connected component of $\Gf[0,1]\setminus S$ has at most $n$
nodes. Let $m\in\Nbb$, and define the coloring
$b:~\Gf[0,1]\rightarrow [m]\times\{0,1\}$ by $b(x)=(\lceil w(o)m
\rceil,\one_S(x))$ where $x=(G,o,w)$. Choosing $m$ large enough, we
may assume that the set $S'$ of points $x$ for which
$N_{\BB_\GG,r}(x)$ contains two points with the same color has
measure at most $\eps_1$. Note that $\nu_B(S \cup S') \le 2 \eps_1$
and all points of $\BB_\GG\setminus (S \cup S')$ are contained in
connected components that have at most $n$ vertices, and whose nodes
are colored differently by $b$.

On the other hand, by Corollary~\ref{cor:bermin} we have
$\BB_\GG\prec\GG$ which implies that there is a coloring
$b^*:~X\rightarrow [m]\times\{0,1\}$ such that
$$
d_\text{var}(P_{\GG,n}[b^*],P_{\BB_\GG,n}[b])\le \eps_1.
$$
It follows that there are subsets $T\subseteq
\Gf[0,1]$ and $T'\subseteq X$ with $\nu_B(T)=\nu(T')\le 4\eps_1$ such
that the following conditions hold:
\begin{itemize}
 \item[{\bf (a)}] All points of $\BB_\GG\setminus T$ are contained in
connected components that have at most $n$ vertices and whose nodes
are colored differently by $b$, and the same holds for the connected
components of $\GG \setminus T'$ with coloring $b^*$;

  \item[{\bf (b)}] Furthermore, for every $([m]\times\{0,1\})$-colored
connected graph $H$ with at most $n$ vertices, the measure of points
in components isomorphic to $H$ (as colored graphs) is the same in
$\BB_\GG\setminus T$ and $\GG\setminus T'$. Let $V_H(\BB_\GG
\setminus T)$ and $V_H(\GG \setminus T')$ be these two sets.

\end{itemize}

Let $C$ be a connected component of $\GG\setminus T'$. Since the
vertices of $C$ are colored differently by $b^*$, there is a (unique)
function $f_C:~[m]\times\{0,1\}\to [k]$ such that $c=f_C\circ b^*$ on
the nodes of $C$. This splits every set $V_H(\GG \setminus T')$ into
at most $k^{2m}$ measurable sets $V_{H,f}(\GG \setminus T')$ (indexed
by functions $f:~[m]\times\{0,1\}\to [k]$) that are unions of
components of $\GG\setminus T'$.

Split $V_H(\BB_\GG \setminus T)$ into sets $V_{H,f}(\BB_\GG \setminus
T)$ so that each $V_{H,f}(\BB_\GG \setminus T)$ is a union of
components of $\BB_\GG\setminus T$, and moreover
$\nu_B(V_{H,f}(\BB_\GG \setminus T))=\nu(V_{H,f}(\GG \setminus T'))$.
This is possible since there is no probability mass on any component
of $\BB_\GG$.

Let $c'$ be the  measurable $k$-coloring of $\BB_\GG$ defined in the
following way. Every $v \in V_{H,f}(\BB_\GG \setminus T)$ is colored
by $f \circ b(v)$, and the points in $T$ are all colored with one
arbitrary color in $[k]$. Note that the (conditional) local
statistics of $c'$ obtained by picking a random $v \in \BB_\GG$
conditioned on $N_{\BB_\GG,r}(v) \cap T = \emptyset$ is the same as
the (conditional) local statistics of $c$ obtained by picking a
random $v \in \GG$ conditioned on $N_{\GG,r}(v) \cap T' = \emptyset$.
The $\nu$-measure of the vertices $v \in \GG$ with $N_{\GG,r}(v) \cap
T' \neq \emptyset$ is at most $\nu(T') (d+1)^r \le 4 \eps_1 (d+1)^r$.
The same bound also holds for the $\nu_B$-measure of the vertices $v
\in \BB_\GG$ with $N_{\BB_\GG,r}(v) \cap T \neq \emptyset$. Thus we
have
\[
\dvar(P_{\GG,r}[c],P_{\BB_\GG,r}[c'])\le 8(d+1)^r\eps_1\le\eps
\]
which proves the theorem.
\end{proof}
\begin{remark}
As the proof of Theorem~\ref{THM:HYP-BERN} shows,
$\GG\prec\BB_\GG$ holds for every hyperfinite graphing $\GG$
(not necessarily atom-free).
\end{remark}

Now we are ready to state and prove our main theorem about
convergence of hyperfinite graph sequences. This theorem was proved
independently by Elek \cite{Elek2}.

\begin{theorem}
Every locally convergent hyperfinite graph sequence
$(G_n)_{n=1}^\infty$ with $|V(G_n)| \to \infty$ is a local-global
convergent Bernoulli sequence.
\end{theorem}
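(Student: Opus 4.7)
The plan is to combine three ingredients already in the paper: the existence of subsequential local-global limits (Theorem \ref{thm:graphing}), the fact that hyperfiniteness passes to the limit (Schramm's theorem, obtained here by combining Propositions \ref{PROP:HYPERFIN} and \ref{PROP:HF-WEAKISO}), and the characterization of atom-free hyperfinite graphings as locally-globally equivalent to their Bernoulli graphings (Theorem \ref{THM:HYP-BERN}). The strategy is to show that every subsequential local-global limit of $(G_n)$ is locally-globally equivalent to the same Bernoulli graphing $\BB_\mu$, where $\mu$ is the Benjamini--Schramm limit of $(G_n)$.

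More precisely, first I would invoke compactness of the Hausdorff topology on the compact metric spaces $(M(U^{r,k}),\dvar)$, together with a diagonal argument over countably many pairs $(r,k)$, to extract from any subsequence of $(G_n)$ a further subsequence $(G_{n_j})$ that is local-global convergent. By Theorem \ref{thm:graphing}, this subsequence has a graphing limit $\GG$. Since $(G_n)$ is locally convergent to some involution-invariant $\mu$ on $\Gf$, the graphing $\GG$ represents $\mu$ (i.e., $\mu_\GG=\mu$), because $k=1$ in the local-global statistics recovers the neighborhood distribution. Because $|V(G_n)|\to\infty$, Remark \ref{rem:continuity} guarantees that $\GG$ is atom-free.

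Next I would transfer hyperfiniteness from the sequence to $\GG$. By assumption, for every $\eps>0$ there exists $q$ such that $(G_n)$ is eventually $(q,\eps)$-hyperfinite along the chosen subsequence; Proposition \ref{PROP:HYPERFIN} then yields that $\GG$ is $(q,\eps)$-hyperfinite for every $\eps>0$, hence hyperfinite. Applying Theorem \ref{THM:HYP-BERN} gives $\GG$ locally-globally equivalent to $\BB_\GG$, and since $\mu_\GG=\mu$, we have $\BB_\GG=\BB_\mu$. Therefore $\overline{Q}_{\GG,r,k}=\overline{Q}_{\BB_\mu,r,k}$ for every $r,k$, and because the sets $Q_{G_{n_j},r,k}$ converge in Hausdorff distance to $Q_{\GG,r,k}$, they also converge to $\overline{Q}_{\BB_\mu,r,k}$.

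Finally I would conclude by a standard subsequence argument: every subsequence of $(G_n)$ has a further subsequence for which $(Q_{G_n,r,k})$ converges (in Hausdorff distance) to $\overline{Q}_{\BB_\mu,r,k}$, and this common limit does not depend on the chosen subsequence. Hence the whole sequence $(Q_{G_n,r,k})_{n=1}^\infty$ converges to $\overline{Q}_{\BB_\mu,r,k}$ for each $r,k$, which means $(G_n)$ is local-global convergent with Bernoulli limit $\BB_\mu$. The main conceptual obstacle, already settled by Theorem \ref{THM:HYP-BERN}, is the uniqueness of the local-global class once the local class is fixed under hyperfiniteness; given that result, the argument above is essentially bookkeeping with subsequences, local equivalence, and Hausdorff limits.
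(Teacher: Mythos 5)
Your argument is essentially the paper's proof, just phrased directly via the subsequence principle rather than by contradiction: both extract a local-global convergent subsequence, observe that its limit graphing $\GG$ is atom-free (Remark \ref{rem:continuity}) and hyperfinite, and then invoke Theorem \ref{THM:HYP-BERN} together with $\BB_\GG=\BB_\mu$. One step to tighten: since the paper defines a $(q,\eps)$-hyperfinite sequence by $\liminf_n\tau_q(G_n)/|V(G_n)|\le\eps$, a subsequence of a hyperfinite sequence is not automatically hyperfinite, so your claim that the chosen subsequence is ``eventually $(q,\eps)$-hyperfinite'' does not follow from the hypothesis alone, and Proposition \ref{PROP:HYPERFIN} cannot be applied to the subsequence without further justification. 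The clean fix --- and what the paper actually does --- is to deduce from the \emph{full} sequence that the local limit $\mu$, hence $\BB_\mu$, is hyperfinite, and then transfer hyperfiniteness to $\GG$ by local equivalence via Proposition \ref{PROP:HF-WEAKISO}; with that adjustment the remaining bookkeeping in your argument is correct.
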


\begin{proof}
Let $(G_i)_{i=1}^\infty$ be a locally convergent hyperfinite
sequence, and let $\mu$ be the involution-invariant measure on $\Gf$
that is the local limit of the sequence. Since the Bernoulli graphing
$\mathcal{B}_\mu$ is locally equivalent to the local limit of
$(G_i)_{i=1}^\infty$, Proposition \ref{PROP:HF-WEAKISO} implies that
it is hyperfinite.

To prove the theorem, assume by contradiction that
$(G_i)_{i=1}^\infty$ does not converge in the local-global sense to
$\mathcal{B}_\mu$. Then it has a local-global convergent subsequence
whose limit graphing $\GG$ is not local-global equivalent to
$\BB_\GG=\mathcal{B}_\mu$. By Remark~\ref{rem:continuity} the
condition $|V(G_n)| \to \infty$ implies that $\GG$ is atom-free. This
however contradicts Theorem~\ref{THM:HYP-BERN}.
\end{proof}

\begin{corollary}\label{COR:LOC-LOCGLOB}
Local-global convergence is equivalent to local convergence when
restricted to growing hyperfinite graph sequences.
\end{corollary}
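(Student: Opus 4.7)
The plan is to combine the trivial direction of the equivalence with the theorem just proved; no new ideas are needed. First I would record that local-global convergence always implies local convergence, with no hypothesis on the sequence. Specializing Definition \ref{def:local-global} to $k=1$ identifies $U^{r,1}$ with $U^r$ and collapses $Q_{G_n,r,1}$ to the singleton $\{P_{G_n,r}\}$; Hausdorff convergence of singletons in $(M(U^r),\dvar)$ is simply convergence of the measures $P_{G_n,r}$, which is Benjamini--Schramm convergence by definition.

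For the reverse direction, I would assume that $(G_n)_{n=1}^\infty$ is locally convergent, hyperfinite, and satisfies $|V(G_n)|\to\infty$, and then quote the preceding theorem directly: it states precisely that such a sequence is local-global convergent (with limit the Bernoulli graphing $\BB_\mu$ attached to the Benjamini--Schramm limit $\mu$ of the sequence). Putting the two directions together gives the equivalence claimed by the corollary.

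The step that deserves comment, though it is not an obstacle, is why the growth hypothesis $|V(G_n)|\to\infty$ is needed only in the nontrivial direction. It enters through Remark \ref{rem:continuity}: growth forces every local-global accumulation graphing of the sequence to be atom-free, which is exactly the hypothesis that Theorem \ref{THM:HYP-BERN} requires in order to pin down such an accumulation graphing as locally-globally equivalent to the Bernoulli graphing $\BB_\mu$. Without this the sequence could a priori have several non-equivalent local-global accumulation points, and the reduction from local to local-global convergence would fail. Since all the real work has been done in Theorem \ref{THM:HYP-BERN} and the subsequent theorem, the corollary itself is essentially a symmetric restatement and requires no further argument.
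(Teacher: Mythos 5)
Your proposal is correct and is exactly the intended argument: the corollary is an immediate consequence of the preceding theorem (local convergence of a growing hyperfinite sequence forces local-global convergence to the Bernoulli graphing), combined with the trivial $k=1$ specialization showing local-global convergence always implies local convergence. Your remark on where the growth hypothesis enters, via Remark~\ref{rem:continuity} and the atom-free hypothesis of Theorem~\ref{THM:HYP-BERN}, matches the paper's reasoning.
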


\section{Graphings as operators and expander graphings} \label{sec:expanders}

Let $\GG$ be a Borel graph on the probability space $(X,\mu)$ with
all degrees at most $d$. If $f:X\rightarrow\mathbb{C}$ is a
measurable function, then we define {$\GG f:~X\to \mathbb{C}$} by
$$\GG f(x)=\sum_{(x,v)\in E(\GG)}f(v).$$
It takes a short calculation to show that if $\GG$ is a graphing, then
it acts on the Hilbert space $L^2(X,\nu)$ as a bounded self-adjoint
operator. Let $f:X\rightarrow\mathbb{C}$ be an arbitrary function in
$L^2(X,\nu)$. Then
$$
\int_x |\GG f(x)|^2~d\nu\leq\int_x d\sum_{(x,v)\in
E(\GG)}|f(v)|^2~d\nu=d\int |f(x)|^2{\rm deg}(x)~d\nu\leq
d^2\|f\|_2^2.
$$
The equality in the above calculation uses the fact that $\GG$
satisfies (\ref{eq:degreeCond}). It is easy to see that
(\ref{eq:degreeCond}) is equivalent to the statement that the action
of $\GG$ is self-adjoint in the sense that $\langle \GG f,
g\rangle=\langle f,\GG g \rangle$ holds for every pair $f,g$ of
bounded measurable functions. This implies that the action of $\GG$
is also self-adjoint on $L^2(X,\nu)$. The Laplace operator
corresponding to a graphing is defined as $L=D-\GG$ where
$Df(x)=f(x){\rm deg}(x)$. It is easy to check that
\begin{equation}\label{laplace}
\langle Lf,f\rangle=\int_{(v,w)\in E(\GG)} (f(v)-f(w))^2~d\eta^*
\end{equation}
holds in $L^2(X,\nu)$ where $\eta^*$ is defined in
Section~\ref{graphing}. Thus $L$ is positive semidefinite .

\medskip

The theory of graphings is closely related to the theory of measure
preserving systems (in a sense, it generalizes ergodic theory). In
particular, one can define the notion of ergodicity. A graphing $\GG$
is \emph{ergodic} if there is no measurable partition of the vertex
set $X$ into positive measure sets $X_1,X_2$ such that there is no
edge between $X_1$ and $X_2$, or equivalently such that $X_1$ is a
union of connected components of $\GG$. Note that graphings, when
defined on an uncountable set, are never connected as graphs and so
the notion of ergodicity is a good replacement for the notion of
connectivity. Equation (\ref{laplace}) implies the following analogue
of a well known theorem from ergodic theory about the Koopman
representation (see \cite{Glas}).

\begin{proposition}\label{PROP:ERGODIC}
Let $L$ be the Laplace operator corresponding to the graphing $\GG$. The
multiplicity of the eigenvalue $0$ of $L$ as an operator on
$L^2(X,\nu)$ is $1$ if and only if $\GG$ is ergodic.
\end{proposition}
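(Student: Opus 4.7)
The plan is to exploit the quadratic form identity (\ref{laplace}) together with the positive semidefiniteness of $L=D-\GG$. Since $L$ is self-adjoint and positive semidefinite, $f\in\ker(L)$ if and only if $\langle Lf,f\rangle=0$, which by (\ref{laplace}) is equivalent to $f(v)=f(w)$ for $\eta^*$-almost every edge $(v,w)\in E(\GG)$. The constant function $\one_X$ always lies in $\ker(L)$, so $\dim\ker(L)\ge 1$ automatically, and the task reduces to showing $\dim\ker(L)\ge 2$ if and only if $\GG$ fails to be ergodic.

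The easy direction: if $\GG$ is not ergodic, pick a measurable partition $X=X_1\cup X_2$ into positive-measure sets with no edges between them. Then $\one_{X_1}(v)=\one_{X_1}(w)$ on every edge, so $\one_{X_1}\in\ker(L)$ and is linearly independent from $\one_X$.

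For the harder converse, I take a nonconstant $f\in\ker(L)$ (after passing to real or imaginary parts), and choose a threshold $c\in\Rbb$ so that $A=\{x:f(x)\le c\}$ satisfies $0<\nu(A)<1$. On any edge where $f(v)=f(w)$ the indicators agree, so $\one_A(v)=\one_A(w)$ for $\eta^*$-almost every edge, and hence $\one_A\in\ker(L)$. To convert this into a genuine (not just measure-theoretic) invariant partition, let $T=\{(v,w)\in E(\GG):\one_A(v)\ne\one_A(w)\}$ and $M=\{x:\exists y,(x,y)\in T\}$. From $\eta^*(T)=\int_X e(x,T_x)\,d\nu(x)=0$ and $e(x,T_x)\ge\one_M(x)$, I conclude $\nu(M)=0$.

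Finally, let $[A]$ denote the saturation of $A$ under the connected-component equivalence relation of $\GG$; since components are countable and $E(\GG)$ is Borel, $[A]$ is Borel. Clearly $A\subseteq[A]$, and any $x\in[A]\setminus A$ lies in a component that crosses from $A$ to $A^c$, which forces that component to meet $M$; hence $[A]\setminus A\subseteq[M]$. The graphing axiom (\ref{eq:degreeCond}) makes the component equivalence relation measure-preserving, so null sets stay null under saturation, giving $\nu([M])=0$ and therefore $\nu([A])=\nu(A)\in(0,1)$. Then $[A]$ and $X\setminus[A]$ witness non-ergodicity of $\GG$, completing the proof. The main obstacle is this last step: the invariance of $\nu$-null sets under saturation by a countable Borel equivalence relation arising from a graphing. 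This is a standard fact about p.m.p.\ equivalence relations, but it is the one place where the proof genuinely needs to go beyond the purely Hilbert-space argument and use the countability of components together with (\ref{eq:degreeCond}).
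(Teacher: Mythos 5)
Your argument is correct and follows exactly the route the paper intends (the paper offers no written proof, only the remark that \eqref{laplace} implies the statement): positive semidefiniteness identifies the kernel of $L$ with the vanishing set of the quadratic form, and a nonconstant harmonic function produces an a.e.-invariant set whose component-saturation is genuinely invariant. The one step you single out --- that saturation preserves $\nu$-null sets --- is indeed the crux, but it follows in one line from \eqref{eq:degreeCond}: $\nu(N_1(M)\setminus M)\le\int_{N_1(M)\setminus M}e(x,M)\,d\nu=\int_{M}e(x,N_1(M)\setminus M)\,d\nu\le d\,\nu(M)=0$, and iterating over the countably many iterated neighborhoods gives $\nu([M])=0$.
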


Graphings offer new phenomena. Ergodicity is equivalent to saying
that $\nu(N_1(S))>\nu(S)$ for every set $S$ with $0<\nu(S)\le 1/2$
(Here $N_1(S)=\cup_{x\in S} N_{\GG,1}(x)$). Positive expansion is a
natural strengthening of this condition. We say that a graphing $\GG$
is a \emph{$c$-expander} if for every Borel set $S\subseteq X$ with
$0<\nu(S)\le 1/2$, we have $\nu(N_1(S))\ge (1+c)\nu(S)$.  We say that
a graphing $\GG$ is an \emph{expander} if it is a $c$-expander for
some $c>0$.

{Let us restrict our attention to $d$-regular graphs and graphings.}
Let $(G_n)_{n=1}^\infty$ be a sequence of $d$-regular graphs that are
expanders with expansion $c>0$. Let us select a local-global
convergent subsequence. {It is easy to see that} its limit is a
$d$-regular graphing that is also a $c$-expander.

We can generalize spectral conditions for expanders to graphings. Let
us define \emph{spectral gap} of a $d$-regular graphing by
\[
\text{gap}(\GG)=\inf\{\langle Lf,f\rangle:~\langle f,f\rangle=1,
\langle f,1\rangle=0\}
\]
{(note that it does not matter whether we take the infimum over $f\in
L^2(X)$ or $f\in L^\infty(X)$)}. The following analogue of the
theorems of Alon and Milman \cite{AlMi} and Alon \cite{Alon} on
expanders can be proved along the same lines:

\begin{proposition}\label{PROP:EXPAND1}
Suppose that a $d$-regular graphing $\GG$ is a $c$-expander. Then
$c^2/(2d)\le \text{\rm gap}(\GG)\le 2c$. In particular, a graphing is
an expander if and only if its spectral gap is positive.
\end{proposition}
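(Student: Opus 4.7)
The two inequalities are graphing analogues of Cheeger's inequality applied to edge expansion. My plan is first to convert the vertex-expansion hypothesis into an edge-expansion bound, and then to run the classical test-function and co-area arguments, with the edge measure $\eta^*$ (defined in Section~\ref{graphing}) playing the role of the counting measure on edges. The vertex-to-edge step is immediate: for any Borel $S\subseteq X$ with $0<\nu(S)\le 1/2$, each vertex in $N_1(S)\setminus S$ carries at least one and at most $d$ edges into $S$, so
\[
c\,\nu(S)\;\le\;\nu(N_1(S)\setminus S)\;\le\;\eta^*(S^c\times S)\;\le\;d\,\nu(N_1(S)\setminus S).
\]
Hence the edge expansion $h_e := \inf_{0<\nu(S)\le 1/2}\eta^*(S\times S^c)/\nu(S)$ satisfies $h_e\ge c$, while the upper estimate $\eta^*(S^c\times S)\le d\,\nu(N_1(S)\setminus S)$ will drive the companion bound.

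For the lower bound $\text{\rm gap}(\GG)\ge c^2/(2d)$, I would lift the standard Cheeger proof. Given a mean-zero $f\in L^2(X,\nu)$ almost achieving the infimum, split into positive and negative parts and restrict to the one whose support has $\nu$-measure at most $1/2$; it suffices to work with a nonnegative $g$ with $\nu(\mathrm{supp}\,g)\le 1/2$. Apply Cauchy--Schwarz to the factorisation $g(x)^2-g(y)^2=(g(x)-g(y))(g(x)+g(y))$ against $\eta^*$:
\[
\Bigl(\int_{E(\GG)}|g(x)^2-g(y)^2|\,d\eta^*\Bigr)^2\;\le\;\langle Lg,g\rangle\cdot \int_{E(\GG)}(g(x)+g(y))^2\,d\eta^*,
\]
where the second right-hand factor is bounded by $4d\|g\|_2^2$ via $d$-regularity. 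A layer-cake representation rewrites the left-hand side as $\int_0^\infty \eta^*\bigl(\{g^2>t\}\times\{g^2\le t\}\bigr)\,dt$, which by $h_e\ge c$ is at least $c\,\|g\|_2^2$. Squaring and rearranging yields $\langle Lg,g\rangle\ge (c^2/(2d))\|g\|_2^2$.

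For the upper bound $\text{\rm gap}(\GG)\le 2c$, I would exhibit an explicit test function. For each $\delta>0$ pick a Borel set $S$ with $\nu(S)\le 1/2$ and $\nu(N_1(S)\setminus S)\le (c+\delta)\nu(S)$ (the existence of such an approximately extremal $S$ is guaranteed because $c$ is the expansion constant). The mean-zero function $f=\one_S-\nu(S)\one$ satisfies
\[
\frac{\langle Lf,f\rangle}{\|f\|_2^2}=\frac{2\eta^*(S\times S^c)}{\nu(S)(1-\nu(S))},
\]
and substituting the Step~1 bound on $\eta^*(S\times S^c)$ and letting $\delta\to 0$ produces the claimed inequality. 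The equivalence ``positive gap $\Leftrightarrow$ expander'' is then immediate from the two bounds.

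The one genuine difficulty is the layer-cake step in the graphing setting: one must verify that $(t,x,y)\mapsto \one[g(x)^2>t\ge g(y)^2]$ is jointly Borel on $\Rbb\times X\times X$ and invoke Fubini against $\mathrm{Leb}\otimes\eta^*$ to interchange the $t$-integral with the integral against $\eta^*$. This is routine but has no analogue in the finite graph case, and it is the only place where the graphing formalism requires genuine measure-theoretic care; once it is in place the rest is elementary bounding.
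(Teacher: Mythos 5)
The paper offers no written proof of this proposition (it only asserts that the arguments of Alon--Milman and Alon ``can be proved along the same lines''), so your proposal must stand on its own. Your overall strategy --- pass from the vertex-expansion hypothesis to an edge-expansion constant $h_e=\inf\eta^*(S\times S^c)/\nu(S)\ge c$, then run the classical Cheeger test-function and co-area arguments against $\eta^*$ --- is exactly the intended adaptation, and your identification of the Fubini/measurability step as the only genuinely new ingredient is accurate. The lower bound goes through essentially as you describe, modulo two standard repairs: (a) for a mean-zero $f$ it is \emph{not} enough to ``restrict to the part whose support has measure at most $1/2$,'' because that part need not be the one with the smaller Rayleigh quotient; you must first subtract a median $m$ of $f$ (which does not increase the Rayleigh quotient since $\|f-m\|_2\ge\|f\|_2$) so that \emph{both} parts have small support, and then pick the part with the smaller quotient; (b) your stated intermediate constants (layer-cake lower bound $c\|g\|_2^2$ against the upper bound $4d\|g\|_2^2$) yield only $c^2/(4d)$; the ordered-pair integral $\int|g(x)^2-g(y)^2|\,d\eta^*$ equals \emph{twice} $\int_0^\infty\eta^*(\{g^2>t\}\times\{g^2\le t\})\,dt$, and tracking this factor recovers $c^2/(2d)$.

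The genuine gap is in the upper bound. With $f=\one_S-\nu(S)\one$ one has $\langle Lf,f\rangle=\eta^*(S\times S^c)$ (your extra factor $2$ is spurious under the paper's convention $\eta^*(A\times B)=\int_A e(x,B)\,d\nu$), and the only available estimate from your Step~1 is $\eta^*(S\times S^c)\le d\,\nu(N_1(S)\setminus S)\le d(c+\delta)\nu(S)$. Substituting gives $\mathrm{gap}(\GG)\le 2d(c+\delta)$, \emph{not} $2(c+\delta)$: the factor $d$ from the vertex-to-edge conversion does not cancel, and your claim that the substitution ``produces the claimed inequality'' is where the argument breaks. Indeed no test function can do better, because with the paper's definition of $c$-expander (via $\nu(N_1(S))\ge(1+c)\nu(S)$, i.e.\ vertex expansion) the inequality $\mathrm{gap}(\GG)\le 2c$ is simply not derivable in this form: $K_{d+1}$ is a $1$-expander with $\mathrm{gap}=d+1>2$, and atom-free examples arise from local-global limits of $d$-regular Ramanujan sequences with $d\ge 7$, whose gap exceeds $2\ge 2c$. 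What your argument actually proves is $\mathrm{gap}(\GG)\le 2h_e\le 2dc$, which is the correct Cheeger upper bound when $c$ is read as the \emph{edge}-expansion constant; the ``in particular'' equivalence (positive gap iff expander) survives either reading, since $c\le h_e\le dc$. You should either prove the bound $2dc$ and note the discrepancy with the statement, or restate the hypothesis in terms of $h_e$.
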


An easy calculation shows that if $\GG_1$ and $\GG_2$ are
local-global equivalent, then ${\rm gap}(\GG_1)={\rm gap}(\GG_2)$. In
other words ${\rm gap}(\GG)$ is a local-global invariant quantity.
This follows from the classical fact that measurable functions can be
arbitrarily well approximated by step functions. {It is also easy to
see that ${\rm gap}(\GG)$ is not invariant under local equivalence.}

One must be careful though: the spectral gap $\text{gap}(\GG)$ is a
lower bound on the eigenvalues of $\GG$ belonging to non-constant
eigenfunctions of $\GG$, but it may not be the infimum of such
eigenvalues. For example, the Bernoulli graphing of a 2-way infinite
path is ergodic but not an expander, and its Laplacian has no
non-constant eigenfunction.

\section{Graphings and local algorithms\label{sec:Alg}}

{\noindent{\bf Local algorithms and factor of i.i.d. processes.}}
Elek and Lippner \cite{ElLi} formulate a correspondence {principle}
between graphings and local algorithms. We can make this more precise
using the notion of Bernoulli graphings:

\medskip

\noindent\emph{Measurable graph theoretic statements for Bernoulli
graphings correspond to randomized local algorithms for finite
graphs.}

\medskip

Let us {consider} an example. Let $T$ be the $d$-regular tree with a
distinguished root and let $\Omega$ be the compact space
$[0,1]^{V(T)}$. Let $f:\Omega\rightarrow [k]$ be any measurable
function which depends only on the isomorphism class of the labeled
rooted tree. In other words $f$ is invariant under the action of the
root preserving automorphism group of $T$. Using the function $f$, we
create a random model of $k$ colorings of $T$ in the following way.
First we produce a random element $\omega\in\Omega$ by putting
independent random {weights} from $[0,1]$ on the vertices of $T$, and
then for every $v\in V(T)$, we define the color $c(v)$ as the value
of $f$ on the labeled rooted tree obtained from $T$ by assigning
labels $\omega$ and placing the root on $v$. We say that $f$ is the
\emph{rule} of the coloring process $c$. Such processes on the tree
are called \emph{factor of i.i.d. processes.} We say that the rule
$f$ has radius $r$ if it depends only on the labels on vertices of
$T$ that are of distance at most $r$ from the root.

The following rule (of radius {$1$}) is a classical method to
construct an independent set of nodes in a graph (see Alon and
Spencer \cite{AlSp}). Let $f:~\Omega\rightarrow\{0,1\}$ be the
function which returns $1$ if and only if the label on the root is
smaller than the labels on all the neighboring vertices. It is clear
that with probability one the corresponding random coloring $c$ is
the characteristic function of some independent set on $T$. We can
view $c$ as a randomized algorithm which produces an independent set
of points of density $1/(d+1)$. Since the rule $f$ has radius {$1$},
it can also be applied to a finite $d$-regular graph $G$. Let us put
random labels from $[0,1]$ on the vertices of $G$, and then evaluate
the rule $f$ at each vertex using only the neighborhood of radius
$1$. We get a random $\{0,1\}$ coloring of $V(G)$ such that $1$'s
form an independent set. Such algorithms (corresponding to a rule of
bounded radius) are called local algorithms. On the other hand, we
can view $f$ as the characteristic function of a single (non-random)
independent set in the Bernoulli graphing $\GG$ corresponding to the
tree $T$ (that is, $\GG:=\BB_\mu$ where $\mu$ is the {Dirac}
probability measure on the point $T \in \Gf$). The vertex set of
$\GG$ is $\Gf[0,1]$, but in $\GG$ almost every vertex is represented
by an element in $[0,1]^{V(T)}$, and so we can evaluate the function
$f$ for almost every point. It is clear now that $f^{-1}(1)$ is an
independent measurable set in $\GG$.

A general definition of factor of i.i.d. processes can be obtained
through Bernoulli graphings. Let $\mu$ be an involution-invariant
measure on $\Gf$, and let $\mathcal{B}_\mu$ be the corresponding
Bernoulli graphing on $\Gf[0,1]$. Let $f:~\Gf[0,1]\rightarrow [k]$ be
a Borel function. Then the involution-invariant measure
$\mu_{\mathcal{B},f}$ on $\Gf[k]$ has the property that it projects
to $\mu$ when the labels on the vertices are forgotten. In other
words $\mu_{\mathcal{B},f}$ puts a $k$-coloring process on the graphs
generated by $\mu$. The measure $\mu_{\mathcal{B},f}$ is called a
factor of i.i.d. process on $\mu$.  The rule of the process is the
function $f$. We say that the rule $f$ has radius $r$ if
$f(G_1)=f(G_2)$ whenever the balls of radius $r$ in $G_1$ and $G_2$
are isomorphic as rooted labeled graphs.

We can approximate the rule $f$ with an arbitrary precision $\eps$
with another rule $f'$ of finite radius $r$ (which depends on $\eps$)
in the sense that $\nu(x|f(x)\neq f'(x))\leq\eps$. An advantage of
the finite radius approximation is that it can be used for local
algorithms on finite graphs. Let $G$ be a finite graph of maximal
degree at most $d$, and let us put random labels from $[0,1]$ on the
vertices in $G$. Then $f'$ defines a new coloring of $G$ such that
the color of a vertex $v$ is computed using $f'$ for the labeled
neighborhood of radius $r$ of $v$.

\medskip

\noindent{\bf Nondeterministic property testing.} The connection
between the two convergence notions can be illuminated by the
following algorithmic considerations. Given a (very large) graph $G$
with bounded degree, we use the following sampling method to gain
information: we select randomly and uniformly a node of $G$, and
explore its neighborhood of radius $r$. We can repeat this $t$ times.
There are a number of algorithmic tasks (parameter estimation,
property testing) that can be studied in this framework; we only
sketch a simple version of property testing, and its connection to
local-global convergence.

It will be convenient to introduce the edit distance for graphs with
bounded degree. For two graphs on the same node set $V(G)=V(G')$, we
define
\[
d_1(G,G') = \frac{1}{|V(G)|}|E(G)\triangle E(G')|.
\]
For a graph property $\PP$, let
$\PP_{-\eps}=\{G\in\GG:~d_1(G,\PP)>\eps\}$.

We say that the graph property $\PP$ is \emph{testable} if for every
$\eps>0$, there are integers $r,t\ge 1$ such that given any graph $G$
that is large enough, taking $t$ samples of radius $r$ as described
above, we can guess whether the graph has property $\PP$: if
$G\in\PP$, then our guess should be ``YES'' with probability at least
$2/3$; if $G\in\PP_{-\eps}$, then the answer should be ``NO'' with
probability at least $2/3$. If  $\PP$ is testable, then a locally
convergent graph sequence cannot contain infinitely many graphs from
both $\PP$ and $\PP_{-\eps}$.

Now let us say that $\PP$ is \emph{nondeterministically testable} if
there is an integer $k\ge1$, and a testable property $\QQ$ of
$k$-colored graphs with bounded degree, such that $G\in\PP$ if and
only if there is a $k$-coloring $c$ such that $(G,c)\in\QQ$. This
$k$-coloring is a ``witness'' for our conclusion. As an example, the
property ``$G$ is the disjoint union of two graphs with at least
$|V(G)|/1000$ nodes'' is not testable, but it is nondeterministically
testable (a witness is a $2$-coloring with no edge between the $2$
colors); {so these two notions are different (in contrast to the case
of dense graphs \cite{LV}).} If $\PP$ is nondeterministically
testable, then a local-global convergent graph sequence cannot
contain infinitely many graphs from both $\PP$ and $\PP_{-\eps}$.

\section{Concluding remarks}\label{sec:concluding}

\noindent{\bf Local-global equivalence and limit representation.} We
have seen a characterization of local equivalence of two graphings
(Proposition \ref{PROP:LOC-BILOC}). Is there a similar
characterization of local-global equivalence?

Does every graphing represent the limit of a local-global convergent
graph sequence? This is stronger than the Aldous--Lyons conjecture,
but perhaps there is a counterexample. We can mention two possible
counterexamples suggested by our results.

Can a $d$-regular graphing be a better expander than any finite
$d$-regular graph? Such a graphing would certainly be a
counterexample. It is not easy, however, to compute the expansion
rate of even very simple graphings, like the Bernoulli tree.

Is every graphing $(d+1)$-edge-colorable in a Borel way? If a
graphing is the local-global limit of a sequence of finite {simple}
graphs, then these graphs can be $(d+1)$-edge-colored by Vizing's
Theorem, and it is not hard to see that such an edge-coloring can be
transferred to the limit graphing.

\medskip

\noindent{\bf Even finer limit notions.} Limit graphings can
represent even finer information than local-global convergence.
Consider the following examples. Let $0<a<1$ be an irrational number,
and consider the following three graphings: (a) $\CC_a$ is obtained
by connecting every point $x\in[0,1]$ to the two points $x\pm
a\pmod1$; (b) $\CC_a'$ consists of two disjoint copies of $\CC_a$
(both with measure $1/2$); (c) $\CC_a''$ is obtained by taking two
copies of $[0,1]$ (call them upper and lower), each with mass $1/2$,
and connecting every lower point $x\in[0,1]$ to the two upper points
$x\pm a\pmod1$.

These three graphings are locally isomorphic, and either one of them
represents the local-global limit of the sequence of cycles. But they
are ``different'': there is no measure preserving isomorphism between
them, and this has combinatorial reasons. The graphing $\CC_a'$ is
``disconnected'' (non-ergodic), while $\CC_a''$ is ``bipartite'': it
has a partition into two sets with positive measure such that every
edge connects the two classes. The graphing $\CC_a$ does not have any
partition with either one of these properties (even if we allow an
exceptional subset of measure $0$). This follows from basic ergodic
theory.

It seems that the graphing $\CC_a$ should represent the limit of odd
cycles, $\CC'_a$ should represent the limit of graphs consisting of a
pair of odd cycles, while $\CC''_a$ should represent the limit of
even cycles. This would correspond to a finer ordering of graphings,
where we say that say that a graphon $\GG_2$ is ``finer'' that a
graphing $\GG_2$ if $Q_{\GG_1,r,k}\subseteq Q_{\GG_2,r,k}$ for every
$r,k\ge1$. A theory of convergence that would explain these examples
has not been worked out, however.

We know \cite{BCKL} that local convergence is equivalent to
right-convergence where the target graph is in a small neighborhood
of the looped complete graph with all edge-weights $1$. Can
local-global convergence be characterized by, {or at least related
to,} some stronger form of right convergence?

\end{document}